\newcommand{\defining}[1]{\emph{#1}}			
\newcommand{\defn}{\coloneqq}			
\newcommand{\ndivides}{\nmid}			
\newcommand{\Iff}{if\textcompwordmark{f}}	
\newcommand{\Bigoh}{O}					
\newcommand{\Littleoh}{o}				
\newcommand{\braces}[1]{\lbrace #1 \rbrace}		
\newcommand{\floor}[1]{\lfloor #1 \rfloor}		
\newcommand{\card}[1]{\lvert #1 \rvert}			
\newcommand{\st}{:}
\newcommand{\Set}[1]{\braces{ #1 }}%
\newcommand{\famF}{\mathcal{F}}	
\newcommand{\famG}{\mathcal{G}}
\newcommand{\famP}{\mathcal{P}}
\DeclareMathOperator{\core}{\mathsf{Cor}}
\DeclareMathOperator{\petal}{\mathsf{Pet}}
\DeclareMathOperator{\Tor}{\mathsf{Tor}}
\DeclareMathOperator{\sunset}{\mathsf{Set}}
\newcommand{\union}{\cup}
\newcommand{\Union}{\bigcup}
\newcommand{\disjunion}{\sqcup}
\newcommand{\disjUnion}{\bigsqcup}
\newcommand{\intersect}{\cap}
\newcommand{\Intersect}{\bigcap}
\newcommand{\Normal}{\text{nor}}
\newcommand{\Excep}{\text{exc}}
\theoremstyle{plain}
\newtheorem{theorem}{Theorem}[section]
\newtheorem{lemma}[theorem]{Lemma}
\newtheorem{corollary}[theorem]{Corollary}
\newtheorem{proposition}[theorem]{Proposition}
\newtheorem{question}[theorem]{Question}
\newtheorem{problem}[theorem]{Problem}
\newtheorem{claim}{Claim}
\theoremstyle{definition}
\newtheorem{definition}[theorem]{Definition}
\theoremstyle{remark}
\newtheorem{remark}[theorem]{Remark}
\newtheorem{observation}[theorem]{Observation}
\newtheorem{example}[theorem]{Example}
\begin{document}

\title{On hierarchically closed fractional intersecting families}

\author[N. Balachandran et al.]{Niranjan Balachandran}
\address{Department of Mathematics, Indian Institute of Technology Bombay, Mumbai, India}
\email{niranj@math.iitb.ac.in}

\author[]{Srimanta Bhattacharya}
\address{Department of Computer Science and Engineering, Indian Institute of Technology Palakkad, Palakkad, India}
\email{mail.srimanta@gmail.com}

\author[]{Krishn Vishwas Kher}
\address{Department of Engineering Science cum Computer Science and Engineering, Indian Institute of Technology Hyderabad, Hyderabad, India}
\email{es19btech11015@iith.ac.in}

\author[]{Rogers Mathew}
\address{Department of Computer Science and Engineering, Indian Institute of Technology Hyderabad, Hyderabad, India}
\email{rogers@cse.iith.ac.in}
\thanks{Research of Rogers Mathew is supported by a grant from the Science and Engineering Research Board, Department of Science and Technology, Govt.\ of India (project number:  MTR/2019/000550).}

\author[]{Brahadeesh Sankarnarayanan}
\address{Department of Mathematics, Indian Institute of Technology Bombay, Mumbai, India}
\email{bs@math.iitb.ac.in}
\thanks{Research of Brahadeesh Sankarnarayanan is supported by the National Board for Higher Mathematics (NBHM), Department of Atomic Energy (DAE), Govt.\ of India., and by the Industrial Research and Consultancy Centre (IRCC), Indian Institute of Technology Bombay, Mumbai, India.}

\date{November 8, 2023}
\subjclass{Primary 05D05; Secondary 05B99, 03E05}
\keywords{Fractional intersecting family, sunflower, intersection theorem}

\begin{abstract}
	For a set $L$ of positive proper fractions and a positive integer $r \geq 2$, a fractional $r$-closed $L$-intersecting family is a collection  $\mathcal{F} \subset \mathcal{P}([n])$ with the property that for any $2 \leq t \leq r$ and $A_1, \dotsc, A_t \in \mathcal{F}$ there exists $\theta \in L$ such that $\lvert A_1 \cap \dotsb \cap A_t \rvert \in \{ \theta \lvert A_1 \rvert, \dotsc, \theta \lvert A_t \rvert\}$.
	In this paper we show that for $r \geq 3$ and $L = \{\theta\}$ any fractional $r$-closed $\theta$-intersecting family has size at most linear in $n$, and this is best possible up to a constant factor.
	We also show that in the case $\theta = 1/2$ we have a tight upper bound of $\lfloor \frac{3n}{2} \rfloor - 2$ and that a maximal $r$-closed $(1/2)$-intersecting family is determined uniquely up to isomorphism.
\end{abstract}

\maketitle

\section{Introduction}

The theory of set systems with restricted intersection sizes is a classical and well-studied problem and the basic template of the problem is as follows.
Given a set $L$ of non-negative integers, determine the maximum size of a family $\famF\subset\famP([n])$ of subsets of $[n] \defn \Set{1,\dotsc,n}$ such that for distinct $A,B\in\famF$ we have $\card{A\intersect B}\in L$.
This problem has its origins in the de Bruijn--Erd\H{o}s theorem with further extensions including the Ray-Chaudhuri--Wilson inequality, the Frankl--Wilson inequality, and the Alon--Babai--Suzuki inequality among a host of other interesting results \cite{deBruijnErdos1948,Ray-ChaudhuriWilson1975,FranklWilson1981,FranklGraham1985,AlonBabaiEtAl1991,Ramanan1997,Snevily2003} and has spawned several variants, each with its own set of highlights and difficulties  besides ushering in a wide range of combinatorial and algebraic tools that are now an integral component of combinatorial techniques for extremal problems.

A recent variant~\cite{BalachandranMathewEtAl2019} of this problem, which is the principal focus of this paper, introduces the notion of \emph{fractional intersecting families} which goes as follows.
Suppose $L=\Set{\theta_1,\dotsc,\theta_\ell}$ is a set of proper positive fractions with $0<\theta_i=\frac{a_i}{b_i}<1$ and $\gcd(a_i,b_i)=1$ for each $i$.
We say that $\famF\subset\famP([n])$ is a \defining{fractional $L$-intersecting family} (or that $\famF$ is \defining{fractionally $L$-intersecting}) if for any two distinct sets $A,B\in\famF$ there exists $\theta\in L$ such that $\card{A\intersect B}\in\Set{\theta\card{A},\theta\card{B}}$.
The most natural question again is:
how large can a fractional $L$-intersecting family be?
This problem still remains unresolved;
the best known bounds are a poly-logarithmic factor away from optimal bounds~\cite{BalachandranMathewEtAl2019}.
The notion of fractional intersecting families has produced other related variants, including the notion of fractional $L$-intersecting families of vector spaces~\cite{MathewMishraEtAl2022} and fractional cross-intersecting families~\cite{MathewRayEtAl2021,WangHou2022}.
Attempts to obtain a linear upper bound for $\card{L}=1$ have led to conjectures on ranks of certain ensembles of matrices~\cite{BalachandranBhattacharyaEtAl2021,BalachandranBhattacharyaEtAl2022}, so the problem of fractional intersecting families has generated a considerable amount of interest.

In this paper we propose a more hierarchical extension of this notion of fractional intersecting families.
But before we get to the notion in more precise terms, we return to the original problem concerning the size of fractional intersecting families for some motivation.
For the rest of the paper, we shall always have $L=\Set{\theta}$ where $\theta=\frac{a}{b}$ is a proper positive fraction with $\gcd(a,b)=1$, and we shall also use the term ``\(\theta\)-intersecting'' interchangeably with ``\(L\)-intersecting''.

One of the main results in~\cite{BalachandranMathewEtAl2019} states that if $\famF$ is a fractional $L$-intersecting family with $L=\Set{\frac{a}{b}}$ then $\card{\famF}\leq \Bigoh_b(n\log n)$.
On the lower bound side, there are constructions of fractional $L$-intersecting families of size $\Omega(n)$.
For $\theta=\frac{1}{2}$, one can improve upon the constant a little more; there exist bisection closed families\footnote{When $\theta=1/2$ a fractional $L$-intersecting family is also called a bisection closed family in~\cite{BalachandranMathewEtAl2019}.} of size $\floor{\frac{3n}{2}}-2$.
What makes the problem of determining the size of maximal bisection closed families more interesting and intriguing is that there are non-isomorphic families of size $\floor{\frac{3n}{2}}-2$.
The simplest example (and an instructive one at that) is the following.

\begin{example}\label{Eg:sunflower}
	For the sake of simplicity, denote the set $\Set{x_1, \dotsc, x_{\ell}}$ by $x_1 \dotsm x_{\ell}$.
	Then, the family
	\[
		\famF =
			\begin{cases}
				\Set{12,13,\dotsc,1n,1234,1256,\dotsc,12(n-1)n}, & n \equiv 0 \pmod{2};\\
				\Set{12,13,\dotsc,1n,1234,1256,\dotsc,12(n-2)(n-1)}, & n \equiv 1 \pmod{2},
			\end{cases}
	\]
	is not only bisection closed, but also \defining{hierarchically bisection closed} in the following sense:
	for any sets $A_1, \dotsc, A_r\in\famF$ we also have $\card{A_1\intersect\dotsb\intersect A_r}\in\Set{\frac12 \card{A_1},\dotsc, \frac12 \card{A_r}}$.
	The easiest way to see this is to note that for this family, the subfamilies of sizes $2$ and $4$ are sunflowers, and also that any collection of subsets in $\famF$ have non-empty intersection.
\end{example}

The other known bisection closed families of size $\floor{\frac{3n}{2}}-2$ arise from a construction using Hadamard matrices, and do not  satisfy this stronger property.
\begin{example}\label{Eg:hadamard}
	Let \(H\) be an \(m \times m\) Hadamard matrix, i.e.\ a matrix whose entries lie in \(\Set{\pm 1}\), and with all the rows being mutually orthogonal.
	Assume that \(H\) is normalized so that the first row is the all-ones vector.
	Let \(J\) denote the \(m \times m\) all-ones matrix.
	Consider the matrix
	\[
		\begin{bmatrix}
			H & \phantom{-}H\\
			H & -H\\
			H & -J
		\end{bmatrix},
	\]
	and delete the first and \((2m+1)\)th rows.
	Viewing the remaining rows as the \(\pm1\) incidence vectors of subsets of \([2m]\), one can verify that this defines a family \(\famF \subset \famP([n])\) that is \(2\)-bisection closed, where \(n = 2m\).
	Since there are \(3m-2\) sets in \(\famF\), we have \(\card{\famF} = \frac{3n}{2} - 2\).
\end{example}
One of the principal reasons why a linear bound, let alone a tight bound, for the size of a bisection closed family is elusive is this diffusive nature of the known families of maximal size.
But since this last example seems to be structurally different from the others, it raises the following more natural question:
how large could a hierarchically bisection closed family be?
 
In order to make this precise, we make a formal definition. 
\begin{definition}\label{D:intersecting}
	Let \(r \geq 2\) and \(L = \Set{ \theta_{1}, \dotsc, \theta_{m} }\) be a set of fractions in \((0,1)\).
	So, \(\theta_{i} = a_{i}/b_{i}\) for some positive integers \(a_{i}, b_{i}\) such that \(\gcd(a_{i},b_{i}) = 1\), for each \(1 \leq i \leq m\).
	A family \(\famF\) of subsets of \([n]\) is called \defining{hierarchically \(r\)-closed \(L\)-intersecting} (or simply {\it $r$-closed $L$-intersecting}) if, for each \(2 \leq t \leq r\) and any \(t\) distinct sets \(A_{1},\dotsc,A_{t}\) in \(\famF\) we have \(\card{ \Intersect_{i=1}^{t} A_{i} } \in \Set{ \theta_{j}\card{A_{i}} \st 1 \leq i \leq t,\ 1 \leq j \leq m }\).
	
	When \(L = \Set{\theta}\), an \(r\)-closed \(L\)-intersecting family is also called an \defining{\(r\)-closed \(\theta\)-intersecting} family.
	In particular, when \(\theta = 1/2\), we call such a family \defining{\(r\)-bisection closed}.
\end{definition}
Note that	if a \(\theta\)-intersecting family is \(r\)-closed, then it is also \(s\)-closed for all \(2 \leq s \leq r\), which explains why we refer to such a family as hierarchically closed.


The natural question that arises is the following.
Suppose $r\geq 3$.
If $\famF\subset\famP([n])$ is $r$-closed $\theta$-intersecting, then determine the optimal upper bound for $\card{\famF}$.
Note that if $r=2$, then we are back to the case of fractional $L$-intersecting families, so it behooves us to set $r\geq 3$ if we hope to see any different emergent phenomenon arising from the definition.
And the main thesis of this paper is that setting $r\geq 3$ makes a big difference.

It is imperative to compare this notion with  another generalization that appears in~\cite{Mishra2019} which goes as follows.
For an integer $r\geq 2$, and $L$ as above, a family $\famF$ is said to be \defining{$r$-wise fractionally $L$-intersecting} if for any distinct $A_1,\dotsc,A_r\in\famF$ there exists $\theta\in L$ such that $\card{A_1\intersect\dotsb\intersect A_r}\in\Set{\theta\card{A_1},\dotsc,\theta\card{A_r}}$.
Again, the problem of determining the size of a maximum $r$-wise fractional $L$-intersecting family is optimally determined in~\cite{Mishra2019}  up to poly-logarithmic factors, and it appears that to get beyond the poly-logarithmic factor needs newer ideas (see~\cite{BalachandranMathewEtAl2019} for more details on this).
Our notion of $r$-closed $\theta$-intersecting is somewhat related and yet vastly different as the main results of our paper will attest. 

We are now in a position to state the main results of the paper.
\begin{theorem}\label{T:normal}
	Let \(\famF\) be an \(r\)-bisection closed family over \([n]\), with \(r \geq 3\).
	Then,
	\begin{equation}\label{Eq:bound}
		\card{\famF} \leq \floor{\tfrac{3n}{2}} - 2\tag{\(*\)}
	\end{equation}
	for all \(n \geq 2\).
	Moreover:
	\begin{enumerate}
		\item (Tightness) For each \(n \geq 2\), there exists an \(r\)-bisection closed family \(\famF_{\max}\) over \([n]\) which attains the bound in \eqref{Eq:bound}.
		\item\label{T:normal2} (Uniqueness) For any family \(\famF\) over \([n]\) that attains the bound in \eqref{Eq:bound}, there is a permutation \(\sigma\) of \([n]\) such that \(\famF_{\max} = \sigma(\famF) \defn \Set{\sigma(A) \st A \in \famF}\), where \(\sigma(E) \defn \Set{ \sigma(a) \st a \in E}\) for any set \(E \in \famP([n])\).
		\item (Stability) There exists an absolute constant $C>0$ such that the following holds.
		If \(\card{\famF} \geq (\frac32 - \epsilon)n\) for some \(0 < \epsilon < 0.1\), then for some permutation \(\sigma\) of \([n]\),
		\[
			\card{\sigma(\famF) \setminus \famF_{\max}} < C\epsilon n.
		\]
	\end{enumerate}
\end{theorem}

When \(\famF\) is a general \(r\)-closed \(\theta\)-intersecting family, where \(\theta\) is not necessarily equal to \(1/2\), we do not have a tight upper bound on \(\card{\famF}\).
But, we are able to establish a linear upper bound on \(\card{\famF}\) even in this case.
\begin{theorem}\label{T:imin2b}
	Let \(\famF\) be an \(r\)-closed \(\theta\)-intersecting family over \([n]\), with \(r \geq 3\).
	Let \(\theta = a/b \in (0,1)\) with \(\gcd(a, b) = 1\), \(a, b > 0\).
	\begin{enumerate}
		\item If \(a > 1\), then \(\card{\famF} \leq 2\bigl(\frac{\ln(b)-\ln(a) + 1}{b-a}\bigr) (n-a) + 1\).
		
		\item If \(a = 1\), then we have two cases:
		\begin{enumerate}
			\item if \(b = 2\) and \(\famF\) contains a set of size \(2\), then \(\card{\famF} \leq (1 + \ln(2))(n-1) + 1\);
		
			\item otherwise, \(\card{\famF} \leq \bigl(\frac{2 \ln(b)}{b - 1}\bigr)(n-1) + 1\).
		\end{enumerate}
	\end{enumerate}
\end{theorem}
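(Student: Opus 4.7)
The plan is to leverage the $t = 3$ instance of the closure condition to impose a nested sunflower structure on $\famF$, and then to bound $|\famF|$ by a petal count. As a preliminary normalization, observe that at most one set of $\famF$ can have size not divisible by $b$: if $b \nmid |A|$ then $\theta|A| = a|A|/b$ is non-integral, so the pairwise condition forces $|A \cap B| = \theta|B|$ for every other $B \in \famF$, in turn requiring $b \mid |B|$. Setting this exceptional set aside at the cost of an additive constant in the final bound, I stratify the remaining family by size, writing $\famF_s := \Set{A \in \famF : |A| = s}$ where $s$ ranges over multiples of $b$ in $[b, n]$.

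The first key structural step is a sunflower claim within each stratum. Any $A, B \in \famF_s$ satisfy $|A \cap B| = \theta s$, and applying the $t = 3$ condition to $A, B, C \in \famF_s$ forces $|A \cap B \cap C| = \theta s = |A \cap B|$, hence $A \cap B \subseteq C$. Iterating over $C$ identifies a common core $K_s$ with $|K_s| = \theta s$, and the petals $A \setminus K_s$ are pairwise disjoint of size $(1 - \theta)s$, yielding $|\famF_s| \leq (n - \theta s)/((1 - \theta)s)$. The second step is a compatibility between strata: for $|\famF_s| \geq 2$, $B \in \famF_t$, and $s < t$, applying the $t = 3$ condition to $A_1, A_2 \in \famF_s$ and $B$ together with $|K_s| = \theta s < \theta t$ forces $K_s \subseteq B$. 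This chains the cores $K_{s_1} \subseteq K_{s_2} \subseteq \dotsb$ across contributing strata $s_1 < s_2 < \dotsb$, and combining with the pairwise condition on $A \in \famF_s$ and $B \in \famF_t$ produces the dichotomy: either $A \cap B = K_s$ (the \emph{generic} case, making the petals $A \setminus K_s$ and $B \setminus K_t$ disjoint), or $|A \cap B| = \theta t$, which imposes the strong size ratio constraint $t \leq s/\theta = sb/a$.

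With this structure in hand, the proof concludes by counting petals in $[n] \setminus K_{s_1}$, each of size at least $(1 - \theta)b = b - a$. Case (1) ($\theta > 1/2$) is the cleanest: petals dominate cores within each set, the ratio constraint $t \leq sb/a < 2s$ severely restricts non-generic pairs, and one extracts the exact bound $\lfloor (n - a)/(b - a) \rfloor + 2$ by a direct count in the largest sunflower together with the exceptional sets. The main obstacle is cases (2) and (3), where non-generic inclusions occur across a geometric range of sizes. Here the argument bundles strata into $\Bigoh(\log_{b/a}(n/a))$ geometric blocks and sums the per-stratum bound, producing the harmonic factor $\ln(b/a)/(b - a)$ (or $\ln(b)/(b - 1)$ when $a = 1$). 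The most delicate point is subcase (3a), $\theta = 1/2$ with a size-$2$ set present: the core of a size-$2$ pair is a single element that can simultaneously belong to many strata, so an ad hoc tighter accounting is needed to attain the sharper coefficient $1 + \ln 2$ rather than $2 \ln 2$.
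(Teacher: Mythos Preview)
Your architecture is essentially the paper's: per-stratum sunflowers, nested cores, the ratio constraint $t \le s/\theta$ governing non-generic intersections, and a partition into geometric blocks $I_k = (i_{\min}/\theta^{k-1},\, i_{\min}/\theta^k]$. Two points need repair. First, you have subcase~(3a) backwards: $1 + \ln 2 \approx 1.693$ is \emph{larger} than $2\ln 2 \approx 1.386$, so (3a) is the \emph{weaker} bound, not a sharper one obtained by tighter accounting. The presence of a size-$2$ set degrades the estimate not because its one-element core sits in many strata (nested cores always do that) but because the petals in $\famF(2)$ have size $(1-\tfrac12)\cdot 2 = 1$, so the block $I_0 = \{2\}$ can absorb up to $|Y_0|$ sets, contributing coefficient $1$ rather than $\ln 2$. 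In the paper's bookkeeping this is precisely the comparison $1/m$ versus $\ln b$, and $1/m > \ln b$ occurs only when $m=1$ and $b=2$.

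Second, and more seriously, your sketch elides the mechanism that produces the factor $2$ in cases~(2) and~(3). Summing the per-block bound $|\famF(I_k)| \le |Y_k|\cdot(\text{harmonic factor})$ over $O(\log_{1/\theta} n)$ blocks would lose a logarithmic factor in $n$, not give a linear bound. What the paper actually uses (Observation~\ref{O:disjoint}) is that the petal regions $Y_k \defn \sunset(\famF(I_k)) \setminus C$ are pairwise disjoint whenever the indices differ by at least $2$; this is the contrapositive of your non-generic ratio constraint. Splitting into odd-indexed and even-indexed $k$ then gives two packings of disjoint sets in $[n]\setminus C$, each of total size at most $n - |C|$, and \emph{that} is where the factor $2$ comes from. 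A smaller gap: beyond the single set $E_\theta$ with $b \nmid |E_\theta|$ that you set aside, the paper isolates two further possible exceptional sets ($E_\Normal$, whose petal meets the top core, and $E_\Excep$, a singleton stratum containing the top core) and shows that at most one of the three can be present (Proposition~\ref{P:exceptions}); your normalization handles only the first.
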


The rest of the paper is organized as follows.
We start with some preliminary results along with some terminology and develop some tools and lemmas in the next section. In Section~\ref{section:main_proofs}, we prove Theorem~\ref{T:imin2b}, and then use this to prove Theorem~\ref{T:normal}. 
We finally conclude with some remarks and open questions in Section~\ref{section:conclusion}.

\section{Preliminaries}

In what follows, we always assume that \(\famF\) is an \(r\)-closed \(\theta\)-intersecting family with \(r \geq 3\).
We denote by \(\famF(i)\) the collection of all \(i\)-element sets in \(\famF\), that is, \(\famF(i) \defn \famF \intersect \binom{[n]}{i}\).

Our first observation is that the possible sizes that could appear in any intersection of \(t\) sets (\(2 \leq t \leq r\)) in \(\famF\) is quite limited.

\begin{proposition}\label{P:small-intersection}
	Let \(2 \leq t \leq r\) and suppose  \(A_1,\dotsc,A_t \in \famF\) are distinct sets with \(\card{A_1} \leq \dotsb \leq \card{A_t}\).
	Then, \(\card{A_1 \intersect \dotsb \intersect A_t} \in \Set{\theta \card{A_1}, \theta \card{A_2}}\).
\end{proposition}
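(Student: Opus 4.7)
The plan is to leverage two facts: the defining property of \(r\)-closed \(\theta\)-intersecting applied to the full tuple \(A_1,\dotsc,A_t\), and the same property applied to just the pair \(\Set{A_1,A_2}\) (which is legitimate since \(r\geq 2\), so \(\famF\) is in particular \(2\)-closed).

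By the defining property, we know \(\card{A_1\intersect\dotsb\intersect A_t}=\theta\card{A_j}\) for \emph{some} index \(j\in\Set{1,\dotsc,t}\); our task is simply to force \(j\in\Set{1,2}\) (or at least to force \(\theta\card{A_j}\) to coincide with one of \(\theta\card{A_1},\theta\card{A_2}\)). To do this, I would bound the intersection from above using the pair \(A_1,A_2\): since \(A_1\intersect\dotsb\intersect A_t \subset A_1\intersect A_2\), we get
\[
\card{A_1\intersect\dotsb\intersect A_t} \leq \card{A_1\intersect A_2} \in \Set{\theta\card{A_1},\theta\card{A_2}},
\]
and because \(\card{A_1}\leq\card{A_2}\) this maximum is at most \(\theta\card{A_2}\).

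Now combine: \(\theta\card{A_j}\leq\theta\card{A_2}\), so \(\card{A_j}\leq\card{A_2}\). If \(j\in\Set{1,2}\) we are already done. If instead \(j\geq 3\), then the ordering \(\card{A_2}\leq\card{A_j}\) forces \(\card{A_j}=\card{A_2}\), and hence \(\theta\card{A_j}=\theta\card{A_2}\), so the intersection size again lies in \(\Set{\theta\card{A_1},\theta\card{A_2}}\).

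There isn't really a hard step here; the whole content is the observation that the pairwise bound \(\card{A_1\intersect A_2}\leq\theta\card{A_2}\) collapses any allowable value \(\theta\card{A_j}\) with \(j\geq 3\) down to \(\theta\card{A_2}\). The main thing to be careful about is invoking the \(2\)-closed property (justified because \(r\geq 3\geq 2\)) and the monotonicity of \(\card{A_1}\leq\card{A_2}\leq\dotsb\leq\card{A_t}\); no use of the specific form \(\theta=a/b\), integrality, or of the value of \(r\) beyond \(r\geq 2\) seems to be needed.
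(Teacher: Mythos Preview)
Your proof is correct and is essentially the same argument as the paper's: bound the \(t\)-wise intersection above by \(\card{A_1\intersect A_2}\leq\theta\card{A_2}\) via the \(2\)-closed property, then observe that the allowed value \(\theta\card{A_j}\) is forced to satisfy \(\card{A_j}\leq\card{A_2}\). The paper's write-up is terser (it records the chain \(\theta\card{A_1}\leq\card{A_1\intersect\dotsb\intersect A_t}\leq\card{A_1\intersect A_2}\leq\theta\card{A_2}\) and concludes directly), but the content is identical.
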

\begin{proof}
Since \(2 \leq t \leq r\), we have \(\theta \card{A_1} \leq \card{A_1 \intersect \dotsb \intersect A_t} \leq \card{A_1 \intersect A_2} \leq \theta \card{A_2}\), and so \(\card{A_1 \intersect \dotsb \intersect A_t} \in  \Set{\theta \card{A_1}, \theta \card{A_2}}\).
\end{proof}

Next, we show that one can often define a core of a set \(A \in \famF\) with certain nice properties.
\begin{definition}\label{D:tor}
	For \(A \in \famF\), define the set \(\Tor(A)\) of \(\theta\)-intersectors of \(A\) by
	\[
		\Tor(A) \defn \Set{ B \in \famF \st \card{B} \geq \card{A},\ \card{A \intersect B} = \theta \card{A}}.
	\]
\end{definition}
Note the condition \(\card{B} \geq \card{A}\) in the definition of \(\Tor(A)\).

\begin{proposition}\label{P:core-well-defined}
	If \(\Tor(A) \neq \emptyset\), then \(A \intersect B = A \intersect B'\) for all \(B, B' \in \Tor(A)\).
\end{proposition}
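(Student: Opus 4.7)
The plan is to apply Proposition~\ref{P:small-intersection} to the triple intersection $A \cap B \cap B'$, exploiting the fact that $r \geq 3$ so we are allowed to look at three-wise intersections.

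Assume $B \neq B'$ (the case $B = B'$ is trivial). Without loss of generality relabel so that $\card{B} \leq \card{B'}$. Then, by the definition of $\Tor(A)$, we have $\card{A} \leq \card{B} \leq \card{B'}$. Applying Proposition~\ref{P:small-intersection} with $t=3$ and the ordered triple $A, B, B'$, we obtain
\[
	\card{A \intersect B \intersect B'} \in \Set{\theta\card{A}, \theta\card{B}}.
\]

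Next, I would use the nested containment $A \intersect B \intersect B' \subseteq A \intersect B$ together with $\card{A \intersect B} = \theta\card{A}$ to conclude that $\card{A \intersect B \intersect B'} \leq \theta\card{A}$. This immediately rules out the possibility $\card{A \intersect B \intersect B'} = \theta\card{B}$ unless $\card{B} \leq \card{A}$, which (combined with $\card{B} \geq \card{A}$) forces $\card{B} = \card{A}$ and makes both candidate values coincide anyway. In either case we conclude $\card{A \intersect B \intersect B'} = \theta\card{A} = \card{A \intersect B}$.

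Since $A \intersect B \intersect B'$ is contained in $A \intersect B$ and has the same cardinality, we get $A \intersect B \intersect B' = A \intersect B$, and symmetrically $A \intersect B \intersect B' = A \intersect B'$ by applying the same size comparison on the $B'$ side (using $\card{A \intersect B'} = \theta\card{A}$). Hence $A \intersect B = A \intersect B'$, as desired. There is no real obstacle here: the hypothesis $r \geq 3$ makes Proposition~\ref{P:small-intersection} available for triple intersections, and the size $\theta \card{A}$ is forced because it is the smaller of the two allowed values and already matches the upper bound $\card{A \intersect B}$.
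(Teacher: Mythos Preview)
Your proof is correct and follows essentially the same approach as the paper: both exploit the $3$-closed property on the triple $A, B, B'$ to pin $\card{A \intersect B \intersect B'}$ at $\theta\card{A}$, then use the containments $A \intersect B \intersect B' \subseteq A \intersect B$ and $A \intersect B \intersect B' \subseteq A \intersect B'$ to conclude equality of sets. The paper's version is slightly terser, writing the sandwich $\theta\card{A} \leq \card{A \intersect B \intersect B'} \leq \card{A \intersect B} = \theta\card{A}$ directly rather than routing through Proposition~\ref{P:small-intersection}, but the content is the same.
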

\begin{proof}
	We have \(\theta\card{A} \leq \card{A \intersect B \intersect B'} \leq \card{A \intersect B} = \card{A \intersect B'} = \theta \card{A}\).
	Hence, \(A \intersect B \intersect B' = A \intersect B\) and \(A \intersect B \intersect B' = A \intersect B'\).
	Thus, \(A \intersect B = A \intersect B'\). 
\end{proof}

\begin{definition}\label{D:core}
	For \(A \in \famF\) such that \(\Tor(A) \neq \emptyset\), define the \defining{core} of \(A\) by
	\[
		\core(A) \defn A \intersect B
	\]
	for any \(B \in \Tor(A)\).
\end{definition}

Proposition~\ref{P:core-well-defined} shows that Definition~\ref{D:core} is well-defined.
For a set \(A \in \famF\), \(\core(A)\) is not defined \Iff\ \(\Tor(A) = \emptyset\).
The next two results describe when this may happen.

\begin{proposition}\label{P:core-well-defined2.1}
	Let \(\card{\famF(i)} \geq 2\).
	Then \(\Tor(A) \neq \emptyset\) for all \(A \in \famF(i)\).
\end{proposition}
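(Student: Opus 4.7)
The plan is to verify the claim by a direct application of the $\theta$-intersecting property to a pair of same-sized sets. Since $\card{\famF(i)} \geq 2$, I can fix any $B \in \famF(i)$ distinct from $A$. The approach is to argue that such a $B$ automatically belongs to $\Tor(A)$, which immediately gives $\Tor(A) \neq \emptyset$.

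The key observation is that both conditions defining $\Tor(A)$ are trivial to verify in this situation. First, the size inequality $\card{B} \geq \card{A}$ holds with equality, since $A, B \in \famF(i)$. Second, because $\famF$ is $r$-closed $\theta$-intersecting with $r \geq 3 \geq 2$, the defining property applied to the pair $\Set{A,B}$ (i.e.\ the $t = 2$ case) yields $\card{A \intersect B} \in \Set{\theta \card{A}, \theta \card{B}}$. Since $\card{A} = \card{B} = i$, this set reduces to the singleton $\Set{\theta i}$, and so $\card{A \intersect B} = \theta \card{A}$. Therefore $B \in \Tor(A)$, as required.

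There is no real obstacle here: the statement is essentially an unpacking of Definition~\ref{D:tor} combined with the pairwise intersection condition, and it does not use any of the machinery developed for larger intersections (Proposition~\ref{P:small-intersection}) or well-definedness of the core (Proposition~\ref{P:core-well-defined}). The content of the proposition lies not in its proof but in its role as a clean sufficient criterion for $\core(A)$ to be defined, which presumably will be invoked repeatedly in later arguments.
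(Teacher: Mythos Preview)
Your proof is correct and matches the paper's own argument essentially verbatim: pick any $B \in \famF(i)$ distinct from $A$, observe $\card{B} = \card{A}$ and $\card{A \intersect B} = \theta\card{A}$, and conclude $B \in \Tor(A)$.
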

\begin{proof}
	If \(A,B \in \famF(i)\) are two distinct sets, then \(\card{A \intersect B} = \theta \card{A}\), so \(B \in \Tor(A)\).
	Hence, \(\Tor(A) \neq \emptyset\).
\end{proof}

\begin{corollary}\label{C:core-well-defined2.1.1}
	If \(A \in \famF(i)\) such that \(\Tor(A) = \emptyset\), then \(\famF(i) = \Set{A}\).
\end{corollary}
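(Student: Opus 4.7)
The statement is the contrapositive of Proposition~\ref{P:core-well-defined2.1} (together with the trivial observation $A \in \famF(i)$). The plan is simply to invoke Proposition~\ref{P:core-well-defined2.1}: if we had $\card{\famF(i)} \geq 2$, then by that proposition every set in $\famF(i)$, including $A$, would have $\Tor(A) \neq \emptyset$, contradicting the hypothesis. Hence $\card{\famF(i)} < 2$, and since $A \in \famF(i)$ we conclude $\famF(i) = \Set{A}$.

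There is no real obstacle here; the corollary is a one-line logical consequence of the preceding proposition. The only thing to be careful about is to note explicitly that $A \in \famF(i)$ forces $\card{\famF(i)} \geq 1$, so the strict inequality $\card{\famF(i)} < 2$ yields equality with $\Set{A}$ rather than the empty set.
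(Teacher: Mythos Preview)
Your argument is correct and is exactly the intended one: the paper states this as an immediate corollary of Proposition~\ref{P:core-well-defined2.1} with no separate proof, and your contrapositive reasoning is precisely how it follows.
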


In fact, Proposition~\ref{P:core-well-defined2.1} implies that the family \(\famF\) is a union of uniform sunflowers.

\begin{definition}\label{D:sunflower}
	A family \(\famF\) of subsets of \([n]\) is called a \defining{sunflower} if, for \(C \defn \Intersect_{A \in \famF} A\), we have \(A \intersect B = C\) for all distinct \(A, B \in \famF\).
\end{definition}

\begin{lemma}\label{L:sunflower}
	Every nonempty \(\famF(i)\) is a sunflower.
\end{lemma}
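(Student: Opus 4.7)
The plan is to reduce the sunflower condition on \(\famF(i)\) to a pair of applications of Proposition~\ref{P:core-well-defined}, after first noting that every set in \(\famF(i)\) is a \(\theta\)-intersector of every other one. If \(\card{\famF(i)} \leq 1\), the claim is vacuous, so I would assume \(\card{\famF(i)} \geq 2\).

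First I would fix some \(A \in \famF(i)\). By Proposition~\ref{P:core-well-defined2.1}, \(\Tor(A) \neq \emptyset\); more strongly, every \(B \in \famF(i) \setminus \Set{A}\) has \(\card{B} = \card{A}\) and \(\card{A \intersect B} = \theta \card{A}\), so \(\famF(i) \setminus \Set{A} \subseteq \Tor(A)\). Proposition~\ref{P:core-well-defined} then gives \(A \intersect B = \core(A)\) for every \(B \in \famF(i) \setminus \Set{A}\); that is, \(A\) shares the same intersection \(\core(A)\) with every other member of \(\famF(i)\).

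Next I would show that this common value does not depend on the choice of \(A\). Given distinct \(A, B \in \famF(i)\), applying the previous step first to \(A\) and then to \(B\) yields \(A \intersect B = \core(A)\) and \(B \intersect A = \core(B)\); since intersection is symmetric, \(\core(A) = \core(B)\). Writing \(C\) for this shared value, we get \(A \intersect B = C\) for all distinct \(A, B \in \famF(i)\). Finally, \(C \subseteq \Intersect_{A \in \famF(i)} A \subseteq A \intersect B = C\) for any chosen distinct pair forces \(\Intersect_{A \in \famF(i)} A = C\), which is precisely the sunflower condition of Definition~\ref{D:sunflower}.

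No substantive obstacle is anticipated; the argument is essentially bookkeeping on top of Propositions~\ref{P:core-well-defined} and~\ref{P:core-well-defined2.1}, together with the symmetry of \(\intersect\). The one subtlety worth flagging is the side condition \(\card{B} \geq \card{A}\) in the definition of \(\Tor(A)\): this is what forces us to work inside a single uniform layer and is precisely why the lemma is formulated layer-by-layer rather than asserting a sunflower structure on all of \(\famF\).
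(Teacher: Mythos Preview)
Your argument is correct and follows essentially the same route as the paper: both observe that distinct \(A,B \in \famF(i)\) lie in each other's \(\Tor\), so \(\core(A) = A \intersect B = \core(B)\), which yields the sunflower structure. Your write-up is a bit more explicit (in particular, you verify that the common core really is \(\Intersect_{A \in \famF(i)} A\)), but the underlying idea is identical.
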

%
%
\begin{proof}
	If \(\card{\famF(i)} \leq 2\), then this is trivial.
	Let \(\card{\famF(i)} \geq 3\).
	To show that \(\card{\famF(i)}\) is a sunflower, it suffices to show that \(\core(A) = \core(B)\) for any two sets \(A, B \in \famF(i)\). 
	The proof of Proposition~\ref{P:core-well-defined2.1} shows that \(A \in \Tor(B)\) and \(B \in \Tor(A)\) for any two sets \(A, B \in \famF(i)\).
	Hence, \(\core(A) = A \intersect B = B \intersect A = \core(B)\).
\end{proof}

\begin{remark}
	\hfill
	\begin{enumerate}
		\item Note that the set \(C\) in Definition~\ref{D:sunflower} is usually called the core of the sunflower.
		In particular, if the sunflower is a singleton set \(\Set{A}\), then \(C = A\).
	
		However, our definition of core is Definition~\ref{D:core}.
		This matches with the above notion when \(\card{\famF(i)} \geq 2\).
		But, when \(\famF(i) = \Set{A}\), \(\core(A)\) is either undefined (if \(\Tor(A) = \emptyset\)), or a subset of \(A\) having cardinality \(\theta i\) (if \(\Tor(A) \neq \emptyset\)).

		\item The property of being \(3\)-closed is crucially used in the proof of Proposition~\ref{P:core-well-defined}.
		Thus, if \(\famF\) is not \(3\)-closed, then Definition~\ref{D:core} cannot be made, and Lemma~\ref{L:sunflower} need not hold.
		Indeed, Example~\ref{Eg:hadamard} shows that there are \(2\)-bisection closed families that do not satisfy this property.
	\end{enumerate}
\end{remark}

We now establish some notations that we will use throughout the rest of this paper.
Let
\begin{align*}
	S &\defn \Set{ i \in [n] \st \famF(i) \neq \emptyset }, &i_{\min} &\defn \min(S),\\
	S_{\Normal} &\defn \Set{ i \in S \st \Tor(A) \neq \emptyset \text{ for all } A \in \famF(i) }, &i_{\max} &\defn \max(S_\Normal),\\
	S_{\Excep} &\defn \Set{ i \in S \st \Tor(A) = \emptyset \text{ for some } A \in \famF(i)}.
\end{align*}
Note that \(S = S_{\Normal} \disjunion S_{\Excep}\).
We say that \(\famF(i)\) is a \defining{normal} sunflower if \(i \in S_{\Normal}\), and we say that it is an \defining{exceptional} sunflower if \(i \in S_{\Excep}\).
Define \(\famF_{\Normal} \defn \Union_{i \in S_{\Normal}} \famF(i)\) and \(\famF_{\Excep} \defn \Union_{i \in S_{\Excep}} \famF(i)\).
Then, \(\famF = \famF_{\Normal} \disjunion \famF_{\Excep}\).
Define \(\petal(A) \defn A \setminus \core(A)\) for each \(A \in \famF_{\Normal}\).
For the sake of brevity, we also define the following:
\begin{align*}
	\sunset(\famF(i)) &\defn \Union_{A \in \famF(i)} A &\text{ for any } & i \in S,\\
	\petal(\famF(i)) &\defn \Union_{A \in \famF(i)} \petal(A)& \text{ for any } & i \in S_{\Normal}, \\
	\core(\famF(i)) &\defn \core(A) &\text{ for any } & A \in \famF(i),\ i \in S_{\Normal}.
\end{align*}
Furthermore, let
\[
	\famF(\geq i) \defn \Union_{j \geq i} \famF(j) \qquad\text{and}\qquad \famF(I) \defn \Union_{i \in I} \famF(i) \quad \text{for any } I \subset [n].
\]
Thus, we may also speak of \(\petal(\famF(\geq i))\) and \(\sunset(\famF(\geq i))\), as well as \(\petal(\famF(I))\) and \(\sunset(\famF(I))\) for any \(I \subset [n]\).

\begin{observation}\label{O:Sne}
	Proposition~\ref{P:core-well-defined2.1} and Corollary~\ref{C:core-well-defined2.1.1} show that if \(\Tor(A) \neq \emptyset\) for some \(A \in \famF(i)\), then \(i \in S_{\Normal}\), and if \(i \in S_{\Excep}\), then \(\card{\famF(i)} = 1\).
\end{observation}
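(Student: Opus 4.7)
The plan is to derive both assertions directly from Proposition~\ref{P:core-well-defined2.1} and Corollary~\ref{C:core-well-defined2.1.1} by unpacking the definitions of \(S_{\Normal}\) and \(S_{\Excep}\); no additional ideas are required, since these two results were engineered to make exactly this dichotomy transparent.

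For the first assertion, suppose \(\Tor(A) \neq \emptyset\) for some \(A \in \famF(i)\). I would split on whether \(\card{\famF(i)} \geq 2\) or \(\famF(i) = \Set{A}\). In the former case, Proposition~\ref{P:core-well-defined2.1} guarantees \(\Tor(A') \neq \emptyset\) for every \(A' \in \famF(i)\), and hence \(i \in S_{\Normal}\) by definition. In the latter case, the hypothesis \(\Tor(A) \neq \emptyset\) already exhausts the membership of \(\famF(i)\), so the defining condition of \(S_{\Normal}\) (namely, that \(\Tor\) is nonempty for \emph{all} sets in \(\famF(i)\)) holds trivially, giving \(i \in S_{\Normal}\).

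For the second assertion, if \(i \in S_{\Excep}\) then by definition there exists some \(A \in \famF(i)\) with \(\Tor(A) = \emptyset\); Corollary~\ref{C:core-well-defined2.1.1} then forces \(\famF(i) = \Set{A}\), yielding \(\card{\famF(i)} = 1\). I do not foresee any obstacle: both claims are immediate rephrasings of the cited results, and the observation essentially serves to fix the terminology (normal vs.\ exceptional sunflowers) that the subsequent arguments will rely on.
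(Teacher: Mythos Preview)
Your proposal is correct and is exactly the verification the paper intends: the observation is stated without proof, relying on the reader to unpack the definitions of \(S_{\Normal}\) and \(S_{\Excep}\) together with Proposition~\ref{P:core-well-defined2.1} and Corollary~\ref{C:core-well-defined2.1.1}, just as you do.
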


\subsection{The structure of \texorpdfstring{\(\famF_{\Normal}\)}{F nor}}

The next few results describe the structure of the normal sunflowers in \(\famF\) in relation to the cores.

\begin{observation}\label{O:sunflower}
	The proof of Lemma~\ref{L:sunflower} shows that if \(A, B \in \famF_{\Normal}\) with \(\card{A} = \card{B}\), then \(\core(A) = \core(B)\).
\end{observation}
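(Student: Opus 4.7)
The plan is to adapt the key step from the proof of Lemma~\ref{L:sunflower}, which shows that any two distinct sets in a nonempty \(\famF(i)\) serve as \(\theta\)-intersectors of one another. I will verify that this mutual membership in \(\Tor(\cdot)\) still holds under the hypothesis of the observation, namely that \(A, B \in \famF_{\Normal}\) have the same size, and then read off the conclusion directly from the definition of \(\core\).

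First I dispense with the trivial case \(A = B\). So assume \(A \neq B\) with \(\card{A} = \card{B} = i\). Since \(A \in \famF_{\Normal}\), by the definition of \(\famF_{\Normal}\) we have \(i \in S_{\Normal}\); in particular \(A, B \in \famF(i)\) with \(\card{\famF(i)} \geq 2\). Because \(\famF\) is \(r\)-closed \(\theta\)-intersecting with \(r \geq 2\) and \(A, B\) are distinct, the \(t = 2\) case of Definition~\ref{D:intersecting} yields \(\card{A \intersect B} = \theta\card{A} = \theta\card{B}\). The size condition \(\card{B} \geq \card{A}\) from Definition~\ref{D:tor} holds (with equality), so \(B \in \Tor(A)\), and symmetrically \(A \in \Tor(B)\).

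Finally, Definition~\ref{D:core} gives \(\core(A) = A \intersect B\) (using \(B \in \Tor(A)\)) and \(\core(B) = B \intersect A\) (using \(A \in \Tor(B)\)), so \(\core(A) = \core(B)\). There is no real obstacle here: the observation is essentially a restatement of the core computation already performed inside the proof of Lemma~\ref{L:sunflower}, the point being that \emph{membership in \(\famF(i)\) with \(|\famF(i)| \geq 2\)} and \emph{membership in \(\famF_{\Normal}\) at a common size \(i\)} are interchangeable, because \(\famF_{\Normal}\) is stratified by size along the index set \(S_{\Normal}\) and any \(i \in S_{\Normal}\) with an \(A \in \famF(i)\) automatically satisfies \(\card{\famF(i)} \geq 2\) as soon as a second same-size set \(B\) is present.
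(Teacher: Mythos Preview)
Your proposal is correct and follows essentially the same approach as the paper: the observation is merely a pointer back to the proof of Lemma~\ref{L:sunflower}, where it is shown (via Proposition~\ref{P:core-well-defined2.1}) that any two distinct same-size sets are mutual \(\theta\)-intersectors, whence \(\core(A) = A \intersect B = \core(B)\). Your write-up unpacks exactly this argument, with the harmless addition of the trivial case \(A = B\).
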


\begin{lemma}\label{L:core-inclusion1}
	If \(A, B \in \famF_{\Normal}\) with \(\card{A} < \card{B}\), then \(\core(A) \subsetneq \core(B)\).
\end{lemma}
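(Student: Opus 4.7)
The plan is to pick witnesses $B' \in \Tor(A)$ and $C \in \Tor(B)$, so that $\core(A) = A \intersect B'$ and $\core(B) = B \intersect C$. I would establish the two containments $\core(A) \subseteq B$ and $\core(A) \subseteq C$ and deduce $\core(A) \subseteq B \intersect C = \core(B)$; strict containment is then immediate from $\card{\core(A)} = \theta\card{A} < \theta\card{B} = \card{\core(B)}$.

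The main step is the containment $\core(A) \subseteq B$. First I would dispose of the trivial case $B = B'$, in which $\core(A) = A \intersect B \subseteq B$ directly. Otherwise, the sets $A, B, B'$ are pairwise distinct (note that $A \neq B'$, since equality would force $\theta\card{A} = \card{A}$ and hence $\theta = 1$), and $A$ has the smallest cardinality among them. Since $\card{A \intersect B \intersect B'} \leq \card{A \intersect B'} = \theta\card{A}$, Proposition~\ref{P:small-intersection} applied to this triple forces $\card{A \intersect B \intersect B'} = \theta\card{A} = \card{A \intersect B'}$, whence $A \intersect B' \subseteq B$. The containment $\core(A) \subseteq C$ follows by the identical argument applied to the triple $\Set{A, B', C}$ (where $A \neq C$ because $\card{C} \geq \card{B} > \card{A}$, and the degenerate case $B' = C$ trivially gives $\core(A) = A \intersect B' \subseteq C$).

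I do not expect a serious obstacle beyond careful bookkeeping: Proposition~\ref{P:small-intersection} requires distinct sets, so one must verify this hypothesis in each triple, which is handled by noting that $\theta \in (0,1)$ rules out the equalities $A = B'$ or $B = C$. The conceptual role of the $3$-closed hypothesis is precisely to pin down each relevant triple intersection as equal to $\theta\card{A}$, so that $A \intersect B'$ cannot grow to include anything outside of $B$ or $C$; without this, one could not force $\core(A)$ to sit inside $\core(B)$.
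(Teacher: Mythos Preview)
Your proof is correct and follows essentially the same approach as the paper: pick witnesses in $\Tor(A)$ and $\Tor(B)$, use the $3$-closed property to force the relevant triple intersection to have size $\theta\card{A}$, conclude that $\core(A)$ sits inside both $B$ and $C$, and hence inside $\core(B) = B \intersect C$. The paper's version is a bit terser and leaves the distinctness checks implicit, whereas you handle the degenerate cases $B = B'$ and $B' = C$ explicitly, but the argument is the same.
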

\begin{proof}
	Let \(A' \in \Tor(A)\), \(B' \in \Tor(B)\).
	Consider \(A \intersect A' \intersect B = \core(A) \intersect B \subseteq \core(A)\).
	Since \(\theta \card{A} \leq \card{A \intersect A' \intersect B} \leq \card{\core(A)} = \theta \card{A}\), we have \(A \intersect A' \intersect B = \core(A)\) and \(\core(A) \subseteq B\). Since \(\card{B} \leq \card{B'}\), we can run the above argument with \(B'\) in place of \(B\) to show that \(\core(A) \subseteq B'\).
	Hence, \(\core(A) \subseteq B \intersect B' = \core(B)\).
	Lastly, \(\core(A) \neq \core(B)\) because \(\card{\core(A)} = \theta \card{A} \neq \theta \card{B} = \card{\core(B)}\).
\end{proof}

\begin{lemma}\label{L:petal-core}
	Suppose that \(i, j \in S\) such that \(i < \theta j\).
	If \(A \in \famF(i)\) and \(B \in \famF(j)\), then \(B \in \Tor(A)\).
	In particular, \(i \in S_{\Normal}\).
\end{lemma}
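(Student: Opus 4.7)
The plan is to directly verify the two defining conditions of $\Tor(A)$, namely that $\card{B} \geq \card{A}$ and $\card{A \intersect B} = \theta\card{A}$, and then derive the ``in particular'' clause as a consequence.

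First I would note that since $\theta < 1$, the hypothesis $i < \theta j$ forces $i < j$, so $\card{B} = j > i = \card{A}$, which gives the size condition and, incidentally, shows that $A$ and $B$ are distinct sets. Hence I can apply the $2$-closed $\theta$-intersecting property to the pair $\{A, B\}$: there exists $\theta' = \theta$ such that $\card{A \intersect B} \in \Set{\theta\card{A}, \theta\card{B}} = \Set{\theta i, \theta j}$. But $\card{A \intersect B} \leq \card{A} = i$, and by hypothesis $i < \theta j$, so the value $\theta j$ is ruled out. Thus $\card{A \intersect B} = \theta i = \theta \card{A}$, and combined with $\card{B} \geq \card{A}$ this gives $B \in \Tor(A)$.

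For the final assertion, the argument above applies to every $A \in \famF(i)$ once we fix any $B \in \famF(j)$ (which exists since $j \in S$). Hence $\Tor(A) \neq \emptyset$ for all $A \in \famF(i)$, so by the definition of $S_{\Normal}$ we conclude $i \in S_{\Normal}$.

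There is no real obstacle here: the proof is a one-line application of Proposition~\ref{P:small-intersection} (or more directly the defining $t = 2$ intersection property) combined with the size bound $\card{A \intersect B} \leq \card{A}$. The only mildly delicate point is to observe that $A \neq B$ so that the intersection condition applies, which follows automatically from $\card{A} < \card{B}$.
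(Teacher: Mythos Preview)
Your proof is correct and follows essentially the same argument as the paper: both use $\card{A \intersect B} \leq \card{A} = i < \theta j$ to rule out the value $\theta j$ and conclude $B \in \Tor(A)$. The only minor difference is that the paper invokes Observation~\ref{O:Sne} for the ``in particular'' clause, whereas you verify the definition of $S_{\Normal}$ directly; both are fine.
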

\begin{proof}
	Since \(\card{A \intersect B} \leq \card{A} < \theta j\), we must have \(\card{A \intersect B} = \theta i\).
	Hence, \(B \in \Tor(A)\).
	Thus, \(i \in S_{\Normal}\) by Observation~\ref{O:Sne}.
\end{proof}

\begin{lemma}\label{L:petal-core2}
	Let \(A \in \famF_{\Normal}\).
	If there exists \(B \in \famF(i_{\max})\) such that \(\petal(A) \intersect \core(B) \neq \emptyset\), then \(\core(B) \subseteq A\).
	Moreover, there is at most one set \(A \in \famF_{\Normal}\) for which this happens.
\end{lemma}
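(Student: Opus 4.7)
My plan for the first claim is to pick $B' \in \Tor(B)$ and analyze the triple intersection $A \intersect B \intersect B'$ via Proposition~\ref{P:small-intersection}; the uniqueness claim will then follow by running the same cardinality bookkeeping on a pair of candidate sets simultaneously.

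First I would observe that the hypothesis $\petal(A) \intersect \core(B) \neq \emptyset$ forces $\card{A} < \card{B} = i_{\max}$: indeed, if $\card{A} = \card{B}$, Observation~\ref{O:sunflower} gives $\core(A) = \core(B)$, so $\petal(A) \intersect \core(B) = \petal(A) \intersect \core(A) = \emptyset$. Since $i_{\max} \in S_\Normal$, we may pick $B' \in \Tor(B)$, and the chain $\card{A} < \card{B} \leq \card{B'}$ lets us invoke Proposition~\ref{P:small-intersection} (valid because $r \geq 3$) to obtain $\card{A \intersect B \intersect B'} \in \Set{\theta\card{A}, \theta\card{B}}$. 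If the value is $\theta\card{B}$, then the inclusion $A \intersect B \intersect B' \subseteq B \intersect B' = \core(B)$ together with matching cardinalities forces $A \intersect B \intersect B' = \core(B)$, so $\core(B) \subseteq A$, as needed. In the contrary case $\card{A \intersect B \intersect B'} = \theta\card{A}$, Lemma~\ref{L:core-inclusion1} gives $\core(A) \subseteq \core(B) = B \intersect B'$, hence $\core(A) \subseteq A \intersect B \intersect B'$, and matching cardinalities force equality. But then any $x \in \petal(A) \intersect \core(B)$ would lie in $A \intersect B \intersect B' = \core(A)$, contradicting $x \notin \core(A)$.

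For uniqueness, suppose $A_1, A_2 \in \famF_\Normal$ both enjoy the property, witnessed respectively by $B_1, B_2 \in \famF(i_{\max})$. By Observation~\ref{O:sunflower}, all sets in $\famF(i_{\max})$ share a common core $C$, and the first half of the lemma already yields $C \subseteq A_1 \intersect A_2$. The same initial reduction also forces $\card{A_j} < i_{\max}$ for $j = 1, 2$. Fix any $B \in \famF(i_{\max})$, so $\core(B) = C$. Since $C \subseteq A_j \intersect B$ while the $2$-intersection property gives $\card{A_j \intersect B} \in \Set{\theta\card{A_j}, \theta\card{B}}$ with $\theta\card{A_j} < \theta\card{B} = \card{C}$, we must have $A_j \intersect B = C$ for both $j$. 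Sandwiching $C \subseteq A_1 \intersect A_2 \intersect B \subseteq A_1 \intersect B = C$ then gives $A_1 \intersect A_2 \intersect B = C$ of size $\theta\card{B}$. If $A_1 \neq A_2$, Proposition~\ref{P:small-intersection} applied to $(A_1, A_2, B)$ would force this size into $\Set{\theta\card{A_1}, \theta\card{A_2}}$, both strictly less than $\theta\card{B}$, a contradiction. The main subtlety I foresee is the contradictory branch of the first part, where the nesting of cores supplied by Lemma~\ref{L:core-inclusion1} is exactly what collapses $A \intersect B \intersect B'$ onto $\core(A)$ and produces the needed contradiction.
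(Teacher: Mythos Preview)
Your proof is correct and follows essentially the same approach as the paper: for the first claim, both arguments analyze the triple intersection $A \intersect B \intersect B'$ with $B' \in \Tor(B)$ and use Lemma~\ref{L:core-inclusion1} to rule out the possibility $\card{A \intersect B \intersect B'} = \theta\card{A}$. For uniqueness, the paper is slightly more direct---it simply observes that $\core(B) \subseteq A_1 \intersect A_2$ forces $\card{A_1 \intersect A_2} \geq \theta i_{\max}$, contradicting $\card{A_1 \intersect A_2} \in \Set{\theta\card{A_1}, \theta\card{A_2}}$---whereas you route through an additional triple intersection with $B$, but the idea is the same.
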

\begin{proof}
	Note that \(\card{A} < i_{\max}\) by Observation~\ref{O:sunflower}.
	Let \(C \in \Tor(B)\), and consider \(A \intersect B \intersect C = A \intersect \core(B) \subseteq \core(B)\).
	By Lemma~\ref{L:core-inclusion1}, \(\core(A) \subseteq \core(B)\), and \(\petal(A) \intersect \core(B) \neq \emptyset\) by assumption.
	Hence, \(\theta \card{A} < \card{A \intersect \core(B)}\), which implies that \(\theta i_{\max} \leq \card{A \intersect B \intersect C} = \card{A \intersect \core(B)} \leq \card{\core(B)} = \theta i_{\max}\).
	Thus, \(\core(B) \subseteq A\).
	
	Now, suppose that there exists \(A' \in \famF_{\Normal}\) distinct from \(A\) for which there exists \(B' \in \famF(i_{\max})\) such that \(\petal(A') \intersect \core(B') \neq \emptyset\).
	By Lemma~\ref{L:sunflower}, \(\core(B) = \core(B')\).
	So, \(\core(B) \subseteq A \intersect A'\), which implies that \(\card{A \intersect A'} \geq \theta i_{\max}\), a contradiction.
\end{proof}

Denote by \(E_{\Normal}\) the unique set \(A \in \famF_{\Normal}\) for which there exists \(B \in \famF(i_{\max})\) such that \(\petal(A) \intersect \core(B) \neq \emptyset\), whenever it exists.
Define \(\famF_{\Normal}^{*} \defn \famF_{\Normal} \setminus \Set{A \in \famF_{\Normal} \st A = E_{\Normal}}\).

\begin{corollary}\label{C:petal-core2}
	For all \(A,B \in \famF_{\Normal}^{*}\), \(\petal(A) \intersect \core(B) = \emptyset\).
\end{corollary}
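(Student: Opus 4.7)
The plan is to argue by contradiction. Suppose \(A, B \in \famF_{\Normal}^{*}\) satisfy \(\petal(A) \intersect \core(B) \neq \emptyset\), and split into three cases based on the comparison of \(\card{A}\) and \(\card{B}\).

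Two of the cases are immediate from results already in hand. When \(\card{A} = \card{B}\), Observation~\ref{O:sunflower} gives \(\core(A) = \core(B)\), so \(\petal(A) \intersect \core(B) = \petal(A) \intersect \core(A) = \emptyset\) by the very definition of petal. When \(\card{A} > \card{B}\), Lemma~\ref{L:core-inclusion1} yields \(\core(B) \subsetneq \core(A)\), and the same conclusion follows. Either way, the standing assumption that the intersection is nonempty is contradicted.

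The substantive case is \(\card{A} < \card{B}\). Here the idea is to lift the offending intersection up to a set of maximum normal size, so that the uniqueness part of Lemma~\ref{L:petal-core2} can be invoked directly. Since \(B \in \famF_{\Normal}\) we have \(S_{\Normal} \neq \emptyset\); choose any \(B' \in \famF(i_{\max})\), taking \(B' = B\) in the corner case \(\card{B} = i_{\max}\). Lemma~\ref{L:core-inclusion1} then gives \(\core(B) \subseteq \core(B')\), so
\[
	\petal(A) \intersect \core(B') \supseteq \petal(A) \intersect \core(B) \neq \emptyset.
\]
Applying Lemma~\ref{L:petal-core2} to the pair \((A, B')\) forces \(A = E_{\Normal}\), contradicting \(A \in \famF_{\Normal}^{*}\).

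The main (mild) obstacle is recognizing that Lemma~\ref{L:petal-core2} is stated only for cores at level \(i_{\max}\), so one must use the monotonicity of cores from Lemma~\ref{L:core-inclusion1} to reduce a general \(B \in \famF_{\Normal}\) to this pinned level. Observe that the argument does not actually require \(B \in \famF_{\Normal}^{*}\); the symmetric phrasing in the corollary is presumably for convenient downstream use.
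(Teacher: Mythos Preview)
Your proof is correct and follows essentially the same route as the paper's: both split into the three cases \(\card{A} \lessgtr \card{B}\), dispatch the easy cases via Observation~\ref{O:sunflower} and Lemma~\ref{L:core-inclusion1}, and in the case \(\card{A} < \card{B}\) lift \(\core(B)\) to \(\core(B')\) for some \(B' \in \famF(i_{\max})\) using Lemma~\ref{L:core-inclusion1} before invoking Lemma~\ref{L:petal-core2}. Your handling of the corner case \(\card{B} = i_{\max}\) is slightly more explicit than the paper's, but the argument is the same.
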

\begin{proof}
	If \(\card{A} > \card{B}\), then \(\core(A) \supsetneq \core(B)\) by Lemma~\ref{L:core-inclusion1}, so \(\petal(A) \intersect \core(B) = \emptyset\).
	If \(\card{A} = \card{B}\), then this follows from Observation~\ref{O:sunflower}.
	Let \(\card{A} < \card{B}\), and suppose \(z \in \petal(A) \intersect \core(B)\).
	Then, by Lemma~\ref{L:core-inclusion1}, \(z \in \core(B')\) for any \(B' \in \famF(i_{\max})\).
	Hence, \(\petal(A) \intersect \core(B') \neq \emptyset\), which implies by Lemma~\ref{L:petal-core2} that \(A = E_{\Normal}\), a contradiction.
\end{proof}

Lemma~\ref{L:core-inclusion1} and Corollary~\ref{C:petal-core2} say that \(\famF_{\Normal}^*\) has the following structure:
the cores of \(\famF_{\Normal}^*\) form an increasing chain, and any petal is disjoint from every core.
In fact, these two results can be used to show that, for \(\famF_{\Normal}^*\), ``\(r\)-closed'' is equivalent to ``\(s\)-closed'' for any \(r, s \geq 3\).

\begin{proposition}\label{P:allbisect}
	\(\famF_{\Normal}^{*}\) is \(s\)-closed \(\theta\)-intersecting for all \(s \geq 2\).
\end{proposition}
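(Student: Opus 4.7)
The plan is to reduce everything to a structural decomposition of the intersection of sets in $\famF_{\Normal}^*$, and then pin down the possible sizes using only the $2$- and $3$-closed properties of $\famF$ (both of which are available since $r \geq 3$). Fix $s \geq 2$ and distinct $A_1,\dotsc,A_s \in \famF_{\Normal}^*$ ordered so that $\card{A_1} \leq \dotsb \leq \card{A_s}$. The first step is to prove the decomposition
\[
	A_1 \intersect \dotsb \intersect A_s = \core(A_1) \disjunion \bigl(\petal(A_1) \intersect \dotsb \intersect \petal(A_s)\bigr).
\]
For the inclusion ``$\supseteq$'', the fact that $\core(A_1) \subseteq A_j$ for every $j$ follows from Lemma~\ref{L:core-inclusion1} together with Observation~\ref{O:sunflower}, which show that the cores are nested along the chain $\core(A_1) \subseteq \dotsb \subseteq \core(A_s)$. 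For the inclusion ``$\subseteq$'', any element $x \in A_1 \intersect \dotsb \intersect A_s$ either lies in $\core(A_1)$, or else lies in $\petal(A_1)$; in the latter case, Corollary~\ref{C:petal-core2} forces $x \notin \core(A_j)$ for every $j$, so $x \in \petal(A_j)$ for every $j$. Writing $P \defn \petal(A_1) \intersect \dotsb \intersect \petal(A_s)$, it therefore suffices to show $\card{P} \in \Set{0,\, \theta(\card{A_2}-\card{A_1})}$, since then $\card{A_1 \intersect \dotsb \intersect A_s} = \theta\card{A_1} + \card{P} \in \Set{\theta\card{A_1},\theta\card{A_2}}$.

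For the analysis of $\card{P}$, set $Q_j \defn \petal(A_1) \intersect \petal(A_j)$ for $j \geq 2$. The $2$-closed constraint on the pair $A_1, A_j$ gives $\card{Q_j} \in \Set{0,\, \theta(\card{A_j}-\card{A_1})}$. If some $Q_j$ is empty, then $P = \emptyset$ and we are done. Otherwise, Proposition~\ref{P:small-intersection} applied to each triple $A_1, A_j, A_k$ with $2 \leq j < k$ yields $\card{Q_j \intersect Q_k} \in \Set{0,\, \card{Q_j}}$: the first alternative once again forces $P = \emptyset$, whereas if the second alternative holds for every such pair, then $Q_j \subseteq Q_k$ whenever $j < k$, so the $Q_j$ form a nested chain and $P = Q_2$, of size exactly $\theta(\card{A_2}-\card{A_1})$.

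The only genuine subtlety lies in the initial decomposition, and this is precisely where restricting to $\famF_{\Normal}^*$ rather than $\famF_{\Normal}$ earns its keep: Corollary~\ref{C:petal-core2} (whose proof requires excluding the exceptional set $E_{\Normal}$) is what prevents petals and cores from different sets in the chain from interacting. Once that is in place, the remainder is routine bookkeeping on top of the $2$- and $3$-closed hypotheses on $\famF$.
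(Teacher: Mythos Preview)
Your proof is correct and takes a genuinely different route from the paper's. The paper argues by induction on $s$: assuming $\famF_{\Normal}^*$ is $r$-closed, it shows $(r{+}1)$-closedness by splitting into cases according to whether any two of the $A_i$ share a size (in which case the sunflower structure forces the intersection to equal $\core(A_1)$), and otherwise comparing two carefully chosen $r$-fold intersections $U = A_1 \intersect \dotsb \intersect A_r$ and $V = A_1 \intersect \dotsb \intersect A_{r-1} \intersect A_{r+1}$, showing each is either $\core(A_1)$ or $A_1 \intersect A_2$.

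Your argument is instead direct for arbitrary $s$: you prove once and for all the structural decomposition $A_1 \intersect \dotsb \intersect A_s = \core(A_1) \disjunion P$ with $P = \bigcap_j \petal(A_j)$, and then analyze $P$ via the sets $Q_j = \petal(A_1) \intersect \petal(A_j)$. The $2$-closed property sizes each $Q_j$, and the $3$-closed property (via Proposition~\ref{P:small-intersection} on triples) forces the $Q_j$ either to collapse to $\emptyset$ or to nest, yielding $P \in \Set{\emptyset, Q_2}$. This is a bit more conceptual: the decomposition formula makes transparent exactly \emph{why} only the two values $\theta\card{A_1}$ and $\theta\card{A_2}$ can occur, and it avoids any induction on $s$. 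The paper's approach, on the other hand, is shorter once the inductive step is set up and does not need to isolate the petal intersection $P$ explicitly. Both proofs ultimately rest on the same two structural inputs, Lemma~\ref{L:core-inclusion1} and Corollary~\ref{C:petal-core2}, and your remark about the latter being the reason one must pass to $\famF_{\Normal}^*$ is exactly right.
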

\begin{proof}
	It suffices to show this for all \(s > r \geq 3\), and by induction it is enough to show this for \(s = r+1\).
	Let \(A_{1},\dotsc,A_{r+1} \in \famF_{\Normal}^{*}\) be any \(r+1\) distinct sets.
	Without loss of generality, suppose that \(\card{A_{1}} \leq \dotsb \leq \card{A_{r+1}}\).
	
	First, suppose that \(\card{A_{i}} = \card{A_{j}}\) for some \(i < j\).
	Then, \(\core(A_{1}) \subseteq \Intersect_{k=1}^{r+1} A_{k} \subseteq \core(A_{i})\) by Lemma~\ref{L:core-inclusion1} and Observation~\ref{O:sunflower}.
	But, by Corollary~\ref{C:petal-core2}, \(\petal(A_{1}) \intersect \core(A_{i}) = \emptyset\).
	Hence, \(\Intersect_{k=1}^{r+1} A_{k} = \core(A_{1})\).
	Thus, \(\card{\Intersect_{k=1}^{r+1} A_{k}} = \theta \card{A_{1}}\).
	So, we are done in this case.
	
	Next, suppose that \(\card{A_{i}} < \card{A_{j}}\) for all \(i < j\).
	Consider \(U = A_{1} \intersect \dotsb \intersect A_{r}\) and \(V = A_{1} \intersect \dotsb \intersect A_{r-1} \intersect A_{r+1}\).
	By Proposition~\ref{P:small-intersection}, we know that \(\card{U}, \card{V} \in \Set{\theta \card{A_{1}}, \theta \card{A_{2}}}\).
	Also, \(\card{U \intersect V} \leq \min\Set{\card{U},\card{V}}\).
	Note that \(U \intersect V = A_{1} \intersect \dotsb \intersect A_{r+1}\).
	
	By Lemma~\ref{L:core-inclusion1}, \(\core(A_{1}) \subseteq U \intersect V\).
	So, if \(\card{U} = \theta \card{A_{1}}\) or \(\card{V} = \theta \card{A_{1}}\), then \(\theta \card{A_{1}} \leq \card{U \intersect V} \leq \theta \card{A_{1}}\), and we are done in this case.
	So, assume that \(\card{U} = \theta \card{A_{2}} = \card{V}\).
	Consider \(U \subseteq A_1 \intersect A_2\).
	Since \(\theta \card{A_2} = \card{U} \leq \card{A_1 \intersect A_2} \leq \theta \card{A_2}\), we have \(U = A_1 \intersect A_2\).
	Similarly, \(V \subseteq A_1 \intersect A_2\) and \(\theta \card{A_2} = \card{V} \leq \card{A_1 \intersect A_2} \leq \theta \card{A_2}\), so \(V = A_1 \intersect A_2\).
	Hence, \(U \intersect V = A_1 \intersect A_2\), and \(\card{U \intersect V} = \card{A_1 \intersect A_2} = \theta \card{A_2}\), so we are done.
\end{proof}

The final result of this section provides a linear upper bound on the size of \(\famF\) when \(\famF = \famF_{\Normal}^*\) and \(\Tor(A) = \Set{ B \in \famF \st \card{B} \geq \card{A}}\) for every \(A \in \famF\).
Also, the proof technique will be used later on in the proof of Theorem~\ref{T:imin2b} in Section~\ref{S:proofs}.

\begin{lemma}\label{L:tor-full}
	Suppose that for all \(A, B \in \famF_{\Normal}^*\) such that \(\card{A} < \card{B}\), we have \(B \in \Tor(A)\).
	Then, \(\card{\famF_{\Normal}^*} \leq \floor{\frac{n-a}{b-a}}\).
\end{lemma}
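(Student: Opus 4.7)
The plan is to exploit the hypothesis to force the petals of $\famF_{\Normal}^*$ to be pairwise disjoint and disjoint from the largest core, after which a simple size comparison yields the claim.

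First, I claim that for every $A, B \in \famF_{\Normal}^*$ with $\card{A} \leq \card{B}$ we have $A \intersect B = \core(A)$. If $\card{A} = \card{B}$, this follows from Lemma~\ref{L:sunflower} together with Observation~\ref{O:sunflower}. If $\card{A} < \card{B}$, then the hypothesis gives $B \in \Tor(A)$, so $\card{A \intersect B} = \theta\card{A} = \card{\core(A)}$; coupled with $\core(A) \subseteq \core(B) \subseteq B$ from Lemma~\ref{L:core-inclusion1}, this sandwiches $A \intersect B = \core(A)$. In particular $\petal(A) \intersect B = \emptyset$, which gives pairwise disjointness of all petals, and also the disjointness of any petal from every maximum-sized set of $\famF_{\Normal}^*$, hence from the common core $C^*$ of those maximum-sized sets.

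Second, assuming $\famF_{\Normal}^* \neq \emptyset$ (else the bound is trivial), let $i^*$ denote the maximum size of a set in $\famF_{\Normal}^*$, so that $\card{C^*} = \theta i^* \geq a$. The pairwise disjoint petals of $\famF_{\Normal}^*$ all lie in $[n] \setminus C^*$, so
\[
	\sum_{A \in \famF_{\Normal}^*} \card{\petal(A)} \leq n - \card{C^*} \leq n - a.
\]

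Finally, since $\theta \card{A}$ is a positive integer and $\gcd(a,b) = 1$, we must have $b \divides \card{A}$, and in particular $\card{A} \geq b$, for every $A \in \famF_{\Normal}^*$. Hence $\card{\petal(A)} = (1 - \theta)\card{A} \geq b - a$. Combining with the displayed bound gives $(b-a)\card{\famF_{\Normal}^*} \leq n-a$, and since $\card{\famF_{\Normal}^*}$ is an integer the desired inequality $\card{\famF_{\Normal}^*} \leq \floor{\frac{n-a}{b-a}}$ follows. There is no real bottleneck here; the only subtle point is the divisibility observation $b \divides \card{A}$, which prevents the minimum petal size from degenerating and so keeps the counting sharp.
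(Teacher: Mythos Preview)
Your proof is correct and follows essentially the same approach as the paper's: both arguments show that the petals of $\famF_{\Normal}^*$ are pairwise disjoint and disjoint from the largest core, then count. Your direct observation that each petal has size at least $b-a$ (via the divisibility $b \divides \card{A}$) is a slight streamlining of the paper's optimization step, which instead frames the count as maximizing $\sum \card{Y_j}/((1-\theta)i_j)$ and argues the maximum occurs at $k=1$, $i_1=b$; the underlying content is the same. (One trivial quibble: your first claim should be stated for \emph{distinct} $A,B$.)
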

\begin{proof}
	For simplicity of notation, assume that \(\famF = \famF_{\Normal}^*\).
	Suppose that \(S = \Set{i_1, \dotsc, i_k}\) with \(i_1 < \dotsb < i_k\).
	Let \(C \defn \core(\famF(i_k))\).
	Define \(Y_j \defn \sunset(\famF(i_j)) \setminus C\) for each \(1 \leq j \leq k\).
	By Lemma~\ref{L:core-inclusion1} and Corollary~\ref{C:petal-core2}, \(Y_j = \petal(\famF(i_j))\) for each \(1 \leq j \leq k\).
	Since \(B \in \Tor(A)\) whenever \(A \in \famF(i_j)\) and \(B \in \famF(i_{j'})\) for \(j < j'\), we must have \(\petal(A) \intersect \petal(B) = \emptyset\).
	Thus, \(Y_j \intersect Y_{j'} = \emptyset\) for all \(j \neq j'\).
	Now, notice that
	\[
		\card{\famF(i_j)} = \frac{\card{Y_j}}{(1-\theta)i_j},
	\]
	since the petals in \(\famF(i_j)\) are pairwise disjoint sets with each having size \((1-\theta)i_j\).
	Thus,
	\[
		\card{\famF} = \sum_{j = 1}^k \card{\famF(i_j)} = \sum_{j=1}^k \frac{\card{Y_j}}{(1-\theta)i_j}.
	\]
	We also have \(\sum_{j=1}^k \card{Y_j} \leq n - \card{C} = n - \theta i_k\).
	It is now easy to see that \(\card{\famF}\) is maximized when \(\card{Y_j} = (1-\theta)i_j\) for \(2 \leq j \leq k\), and \(\card{Y_1}\) is the largest integer \(\leq n - \theta i_k - \sum_{j=2}^k (1-\theta)i_j\) which is divisible by \((1-\theta)i_1\).
	Thus, the maximum of \(\card{\famF}\) taken as \(S\) varies over all subsets of \([n]\) of size \(k\), with \(k\) varying from \(1\) to \(n\), occurs when \(k=1\) and \(i_1 = b\), where \(\theta = a/b\) in least form, \(a, b > 0\).
	This maximum is easily seen to be \(\floor{\frac{n-a}{b-a}}\).
\end{proof}

\subsection{The structure of \texorpdfstring{\(\famF_{\Excep}\)}{F exc}}

The next few results describe the structure of the exceptional sunflowers in \(\famF\) in relation to the cores.

\begin{lemma}\label{L:core-well-defined2.2}
	Suppose that \(S_{\Normal} \neq \emptyset\).
	Let \(i \in S_{\Excep}\) such that \(i > i_{\max}\).
	If \(\famF(i) = \Set{A}\), then \(\core(\famF(i_{\max})) \subseteq A\).
\end{lemma}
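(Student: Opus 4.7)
The plan is to extract the needed structural information about $A$ by intersecting it with a carefully chosen witness pair coming from the sunflower $\famF(i_{\max})$. First I would pick any $B \in \famF(i_{\max})$ and set $C \defn \core(\famF(i_{\max})) = \core(B)$. Since $i_{\max} \in S_{\Normal}$, the set $\Tor(B)$ is nonempty, so pick some $B' \in \Tor(B)$; by definition $\card{B'} \geq i_{\max}$, $B \intersect B' = C$, and $\card{C} = \theta i_{\max}$. Note that $B \neq A$, because $\card{B} = i_{\max} < i = \card{A}$. The edge case $B' = A$ disposes of itself: in that situation $C = B \intersect B' = A \intersect B \subseteq A$, and we are done.

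Otherwise $A$, $B$, $B'$ are three distinct sets with $B$ strictly the smallest, and I would apply Proposition~\ref{P:small-intersection} (this is where we use $r \geq 3$) to this triple to obtain
\[
	\card{A \intersect B \intersect B'} \in \Set{\theta i_{\max},\ \theta m},
\]
where $m$ denotes the second-smallest of the three sizes $i_{\max}$, $\card{A}$, $\card{B'}$. Because $\card{A} > i_{\max}$ and $\card{B'} \geq i_{\max}$, one always has $m \geq i_{\max}$, with equality \Iff\ $\card{B'} = i_{\max}$. When $m = i_{\max}$, both alternatives coincide and the intersection has size exactly $\theta i_{\max}$; when $m > i_{\max}$, we have $\theta m > \theta i_{\max}$, but the trivial bound $\card{A \intersect B \intersect B'} \leq \card{B \intersect B'} = \theta i_{\max}$ rules out the larger alternative, again forcing $\card{A \intersect B \intersect B'} = \theta i_{\max}$.

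Combining this equality of cardinalities with the containment $A \intersect B \intersect B' \subseteq B \intersect B' = C$ yields $A \intersect B \intersect B' = C$, and therefore $C \subseteq A$, which is the desired conclusion. The only substantive ingredient is the ``size pinch'' $\card{A \intersect B \intersect B'} \leq \card{B \intersect B'} = \theta i_{\max}$, which defeats the larger alternative offered by Proposition~\ref{P:small-intersection}. I do not anticipate any genuine obstacle: the argument is essentially one carefully chosen triple intersection, with the case $B' = A$ handled in a single line.
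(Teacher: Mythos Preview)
Your argument is correct and follows essentially the same route as the paper: pick $B \in \famF(i_{\max})$ and a witness $B' \in \Tor(B)$, then squeeze $\card{A \intersect B \intersect B'}$ between $\theta i_{\max}$ and $\card{B \intersect B'} = \theta i_{\max}$ to conclude $\core(B) \subseteq A$. The paper's version is simply more compressed---it writes the single sandwich $\theta\card{B} \leq \card{A \intersect B \intersect B'} \leq \card{\core(B)} = \theta\card{B}$ without explicitly naming Proposition~\ref{P:small-intersection} or separating out the case $B' = A$.
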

\begin{proof}
	Let \(B \in \famF(i_{\max})\) and \(C \in \Tor(B)\).
	Consider \(A \intersect B \intersect C = A \intersect \core(B) \subseteq \core(B)\).
	Since, \(\theta \card{B} \leq \card{A \intersect B \intersect C} \leq \card{\core(B)} = \theta \card{B}\), we have \(\core(B) \subseteq A\), as required.
\end{proof}

\begin{lemma}\label{L:core-well-defined2.3}
	Suppose that \(S_{\Normal} \neq \emptyset\).
	Let \(i \in S_{\Excep}\) such that \(i < i_{\max}\).
	If \(\famF(i) = \Set{A}\), then, either \(\card{A \intersect \core(\famF(i_{\max}))} = \theta i\), or \(\core(\famF(i_{\max})) \subseteq A\).
	Moreover, there is at most one \(i < i_{\max}\) such that the latter case holds.
\end{lemma}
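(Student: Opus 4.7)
The plan is to imitate the argument of Lemma~\ref{L:core-well-defined2.2}: I will pick some $B \in \famF(i_{\max})$ and $C \in \Tor(B)$, and then extract the dichotomy from the two possible sizes of the triple intersection $A \intersect B \intersect C$ allowed by Proposition~\ref{P:small-intersection}.

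First I would verify the setup. Since $S_{\Normal} \neq \emptyset$, the index $i_{\max}$ is defined and $\famF(i_{\max})$ is nonempty, so such a $B$ exists; since $i_{\max} \in S_{\Normal}$, $\Tor(B) \neq \emptyset$ and so some $C$ exists. The three sets $A, B, C$ are pairwise distinct because $\card{A} = i < i_{\max} = \card{B} \leq \card{C}$ and $C \neq B$ (otherwise $\card{B \intersect C} = \card{B} \neq \theta \card{B}$). Sorting by size and applying Proposition~\ref{P:small-intersection} yields $\card{A \intersect B \intersect C} \in \Set{\theta i, \theta i_{\max}}$. Rewriting $A \intersect B \intersect C = A \intersect (B \intersect C) = A \intersect \core(B) = A \intersect \core(\famF(i_{\max}))$, the first alternative delivers $\card{A \intersect \core(\famF(i_{\max}))} = \theta i$, while the second forces $A \intersect \core(\famF(i_{\max})) = \core(\famF(i_{\max}))$ (since the latter has cardinality $\theta i_{\max}$ and contains the intersection), i.e.\ $\core(\famF(i_{\max})) \subseteq A$.

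For the uniqueness clause I would argue by contradiction. Suppose there are two indices $i_1 < i_2 < i_{\max}$ in $S_{\Excep}$ with singleton sunflowers $\famF(i_j) = \Set{A_j}$ and $\core(\famF(i_{\max})) \subseteq A_j$ for $j = 1, 2$. Picking any $B \in \famF(i_{\max})$, note that $\core(\famF(i_{\max})) \subseteq B$ by definition, and hence $\core(\famF(i_{\max})) \subseteq A_1 \intersect A_2 \intersect B$, whence $\card{A_1 \intersect A_2 \intersect B} \geq \theta i_{\max}$. But Proposition~\ref{P:small-intersection} applied to these three (pairwise distinct) sets, sorted as $A_1, A_2, B$, gives $\card{A_1 \intersect A_2 \intersect B} \in \Set{\theta i_1, \theta i_2}$, both of which are strictly less than $\theta i_{\max}$ --- a contradiction.

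I do not foresee any substantive obstacle here: the whole argument is a direct application of Proposition~\ref{P:small-intersection} to two cleverly chosen triples. The only point that needs a little care is verifying that the three sets in each triple are pairwise distinct, which follows from the strict size ordering together with the observation that any member of $\Tor(B)$ must differ from $B$ itself.
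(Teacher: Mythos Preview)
Your argument is correct and follows essentially the same route as the paper: pick \(B \in \famF(i_{\max})\) and \(C \in \Tor(B)\), identify \(A \intersect B \intersect C\) with \(A \intersect \core(\famF(i_{\max}))\), and read off the dichotomy from the two admissible sizes. The only minor deviation is in the uniqueness clause, where the paper gets the contradiction directly from the \emph{pairwise} intersection \(\card{A \intersect A'} \geq \theta i_{\max}\) (using only the \(2\)-closed property), whereas you bring in a third set \(B\) and invoke Proposition~\ref{P:small-intersection}; both arguments are valid and amount to the same idea.
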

\begin{proof}
	Let \(B \in \famF(i_{\max})\) and \(C \in \Tor(B)\).
	Consider \(A \intersect B \intersect C = A \intersect \core(B) \subseteq \core(B)\).
	If \(\card{A \intersect B \intersect C} < \theta i_{\max}\), then we must have \(\card{A \intersect \core(B)} = \theta i\), which is the former case.	
	If \(\card{A \intersect B \intersect C} = \theta i_{\max}\), then \(A \intersect B \intersect C = \core(B)\), since \(\card{\core(B)} = \theta i_{\max}\).
	Hence, \(\core(B) \subseteq A\), which is the latter case.
	Lastly, suppose for the sake of contradiction that there exists \(i' \in S_{\Excep}\), \(i' \neq i\), such that \(i' < i_{\max}\), \(\famF(i') = \Set{A'}\), and \(\core(B) \subseteq A'\).
	Then, \(A \intersect A' \supseteq \core(B)\), so \(\card{A \intersect A'} \geq \theta i_{\max}\), which is a contradiction.
\end{proof}

Denote by \(E_{\Excep}\) the unique set in \(\famF_{\Excep}\) such that \(\card{E_{\Excep}} < i_{\max}\) and \(\core(\famF(i_{\max})) \subseteq E_{\Excep}\), whenever it exists.

\begin{lemma}\label{L:theta}
	Let \(\theta = a/b\), \(\gcd(a,b) = 1\).
	Let \(A \in \famF\) such that \(b \ndivides \card{A}\).
	Then, \(A \in \famF_{\Excep}\), and there is at most one such set \(A\) in \(\famF\).
	Moreover, if \(S_{\Normal} \neq \emptyset\), then \(\core(\famF(i_{\max})) \subseteq A\).
\end{lemma}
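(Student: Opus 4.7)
The plan is to prove the three assertions in sequence, each as a short consequence of the previously established machinery and the basic divisibility observation that for a positive integer $m$, the quantity $\theta m = am/b$ is an integer if and only if $b \divides m$ (using $\gcd(a,b) = 1$).

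For the first assertion, I would note that $\Tor(A)$ must be empty: any $B \in \Tor(A)$ would force $\card{A \intersect B} = a\card{A}/b$ to be an integer, contradicting $b \ndivides \card{A}$. Hence $\card{A} \in S_{\Excep}$ by definition, so $A \in \famF_{\Excep}$.

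For the uniqueness claim, I would argue by contradiction: suppose $A' \in \famF$ is a second such set with $b \ndivides \card{A'}$, and without loss of generality $\card{A} \leq \card{A'}$. Since $\famF$ is $2$-closed $\theta$-intersecting, $\card{A \intersect A'}$ equals $\theta \card{A}$ or $\theta \card{A'}$; but the divisibility hypothesis makes both non-integers, contradicting that $\card{A \intersect A'}$ is a nonnegative integer.

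Lastly, assume $S_{\Normal} \neq \emptyset$ and establish $\core(\famF(i_{\max})) \subseteq A$. From the first assertion, $\card{A} \in S_{\Excep}$, so by Observation~\ref{O:Sne} we have $\famF(\card{A}) = \Set{A}$; moreover $\card{A} \neq i_{\max}$ since $S_{\Normal}$ and $S_{\Excep}$ are disjoint. If $\card{A} > i_{\max}$, Lemma~\ref{L:core-well-defined2.2} concludes at once. If $\card{A} < i_{\max}$, Lemma~\ref{L:core-well-defined2.3} gives two alternatives, but the option $\card{A \intersect \core(\famF(i_{\max}))} = \theta \card{A}$ is excluded by non-integrality, leaving $\core(\famF(i_{\max})) \subseteq A$. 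I do not anticipate any genuine obstacle in this lemma — its role is to collect, under the single divisibility hypothesis, the consequences already packaged in Observation~\ref{O:Sne} and Lemmas~\ref{L:core-well-defined2.2} and \ref{L:core-well-defined2.3}.
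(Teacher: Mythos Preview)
Your proof is correct and follows essentially the same approach as the paper for the first two assertions. For the third assertion the paper gives a direct triple-intersection argument (taking $B \in \famF(i_{\max})$ and $C \in \Tor(B)$ and analyzing $A \intersect B \intersect C$), whereas you instead invoke Lemmas~\ref{L:core-well-defined2.2} and~\ref{L:core-well-defined2.3} together with Observation~\ref{O:Sne}; since those lemmas themselves encapsulate exactly that triple-intersection computation, your route is a slightly more modular repackaging of the same idea rather than a genuinely different argument.
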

\begin{proof}
	For any \(A_{1} \in \famF\) distinct from \(A\), we must have \(\card{A \intersect A_{1}} = \theta \card{A_{1}}\), since \(\theta \card{A}\) is not an integer.
	So, \(\Tor(A) = \emptyset\), which implies that \(A \in \famF_{\Excep}\).
	If there were another such set \(A'\), then \(\card{A \intersect A'}\) can be neither \(\theta \card{A}\) nor \(\theta \card{A'}\), which is a contradiction.
	
	Let \(S_{\Normal} \neq \emptyset\), \(B \in \famF(i_{\max})\), and \(C \in \Tor(B)\).
	Consider \(A \intersect B \intersect C = A \intersect \core(B)\).
	Since, \(\card{A \intersect B \intersect C} \neq \theta \card{A}\), we have \(\theta i_{\max} \leq \card{A \intersect B \intersect C} \leq \card{\core(B)} = \theta i_{\max}\).
	Hence, \(A \intersect B \intersect C = \core(B)\), which implies that \(\core(B) \subseteq A\).
	
\end{proof}

Denote by \(E_{\theta}\) the unique set in \(\famF\) such that \(b \ndivides \card{E_{\theta}}\) (where \(\theta = a/b\), \(\gcd(a,b) = 1\)), whenever it exists.
Define \(\famF_{\Excep}^{*} \defn \famF_{\Excep} \setminus \Set{A \in \famF_{\Excep} \st A = E_{\Excep} \text{ or } E_{\theta}}\).
Define \(\famF^* \defn \famF_{\Normal}^{*} \union \famF_{\Excep}^{*}\).

\subsection{The structure of \texorpdfstring{\(\famF^{*}\)}{F*}}

\begin{observation}\label{O:divide}
	If \(\theta = a/b\), \(\gcd(a,b) = 1\), then \(\card{A} \equiv 0 \pmod{b}\) for all \(A \in \famF^*\).
\end{observation}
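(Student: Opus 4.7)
The plan is to handle the two pieces of $\famF^* = \famF_{\Normal}^* \cup \famF_{\Excep}^*$ separately; the divisibility conclusion will drop out almost immediately from the definitions and Lemma~\ref{L:theta}. No inductive machinery or new construction should be needed, since both $\famF_{\Normal}^*$ and $\famF_{\Excep}^*$ have been cut down precisely to the sets whose cardinality is forced to be a multiple of~$b$.

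First, suppose $A \in \famF_{\Normal}^*$. Then in particular $A \in \famF_{\Normal}$, so by the definition of $S_{\Normal}$ we have $\Tor(A) \neq \emptyset$. Pick any $B \in \Tor(A)$; then $\card{A \intersect B} = \theta \card{A} = \tfrac{a}{b}\card{A}$ is a nonnegative integer. Since $\gcd(a,b) = 1$, this forces $b \divides \card{A}$.

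Next, suppose $A \in \famF_{\Excep}^*$. Then $A \in \famF_{\Excep}$ and $A \neq E_{\theta}$ (and $A \neq E_{\Normal}$, but this latter fact is not needed here). Lemma~\ref{L:theta} asserts that \emph{any} set in $\famF$ whose cardinality is not divisible by $b$ must coincide with the unique set $E_{\theta}$. Contrapositively, since $A$ is in $\famF$ but $A \neq E_{\theta}$, we conclude $b \divides \card{A}$.

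Combining the two cases gives $\card{A} \equiv 0 \pmod{b}$ for every $A \in \famF^*$, which is the claim. There is no genuine obstacle to overcome: the whole content of the statement is that the ``bad'' sets responsible for non-divisibility have already been removed, namely the at-most-one set $E_{\theta}$ identified in Lemma~\ref{L:theta}, and that what remains after this surgery (together with $\famF_{\Normal}^*$, where divisibility comes for free from $\Tor$ being nonempty) is automatically $b$-divisible in cardinality.
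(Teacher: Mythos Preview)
Your proof is correct and matches the paper's implicit reasoning (the paper states this as an observation without proof, since it follows immediately from the definition of \(\famF^*\) together with Lemma~\ref{L:theta}). One minor slip: in the second case you write ``\(A \neq E_{\Normal}\)'' where you mean ``\(A \neq E_{\Excep}\)'', but as you correctly note this fact is not needed for the argument.
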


\begin{proposition}\label{P:exceptions}
	\(\card{\famF^*} \leq \card{\famF} \leq \card{\famF^*} + 1\).
\end{proposition}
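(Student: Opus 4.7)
The plan is to show that $\card{\famF \setminus \famF^*} \leq 1$, which immediately yields both inequalities in the proposition (the lower bound $\card{\famF^*} \leq \card{\famF}$ is trivial since $\famF^* \subseteq \famF$). By construction, $\famF \setminus \famF^* \subseteq \Set{E_{\Normal}, E_{\Excep}, E_\theta}$, so a priori one only gets $\card{\famF} \leq \card{\famF^*} + 3$. It therefore suffices to show that at most one of these three distinguished sets actually lies in $\famF$.

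The key observation I would use is that whenever $S_{\Normal} \neq \emptyset$ and any of $E_{\Normal}, E_{\Excep}, E_\theta$ exists, it must contain $C \defn \core(\famF(i_{\max}))$---by Lemmas \ref{L:petal-core2}, \ref{L:core-well-defined2.3}, and \ref{L:theta} respectively. Hence any two distinct sets $X, Y$ chosen among them satisfy $\card{X \intersect Y} \geq \card{C} = \theta i_{\max}$. On the other hand, the $r$-closed $\theta$-intersecting property forces $\card{X \intersect Y} \in \Set{\theta \card{X}, \theta \card{Y}}$. Combined with $\card{E_{\Normal}}, \card{E_{\Excep}} < i_{\max}$ (which holds by the very definitions of these sets), this will deliver a contradiction in every coexistence scenario.

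Concretely, if $E_{\Normal}$ exists, a coexisting $E_{\Excep}$ would give $\card{E_{\Normal} \intersect E_{\Excep}} \geq \theta i_{\max}$, while the intersection condition forces this value to lie in $\Set{\theta \card{E_{\Normal}}, \theta \card{E_{\Excep}}}$, both strictly less than $\theta i_{\max}$---a contradiction. A coexisting $E_\theta$ is excluded similarly: the non-integrality of $\theta \card{E_\theta}$ (since $b \ndivides \card{E_\theta}$) forces $\card{E_{\Normal} \intersect E_\theta} = \theta \card{E_{\Normal}} < \theta i_{\max}$, again contradicting the lower bound. If instead $E_{\Normal}$ does not exist but both $E_{\Excep}$ and $E_\theta$ do (so $S_{\Normal} \neq \emptyset$, as $E_{\Excep}$ is only defined in that case), the same non-integrality argument applied to the pair $(E_{\Excep}, E_\theta)$ forces $E_{\Excep} = E_\theta$. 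The remaining case $S_{\Normal} = \emptyset$ rules out both $E_{\Normal}$ and $E_{\Excep}$, leaving only $E_\theta$ as a potential extra element.

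The main (and only) obstacle is recognizing that the non-integrality of $\theta \card{E_\theta}$ is precisely what breaks the symmetry of the two options in $\Set{\theta \card{X}, \theta \card{Y}}$ and pins the intersection onto the branch bounded by $\theta i_{\max}$; without this observation, one would have no leverage to rule out $E_\theta$ coexisting with $E_{\Normal}$ or with a distinct $E_{\Excep}$, and one would be stuck with only $\card{\famF} \leq \card{\famF^*} + 2$.
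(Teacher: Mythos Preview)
Your proposal is correct and follows essentially the same approach as the paper: reduce to showing that at most one of $E_{\Normal}$, $E_{\Excep}$, $E_\theta$ belongs to $\famF$; observe that when $S_{\Normal} \neq \emptyset$ each of these (whenever it exists) contains $\core(\famF(i_{\max}))$ by Lemmas~\ref{L:petal-core2}, \ref{L:core-well-defined2.3}, and~\ref{L:theta}; and derive a contradiction from $\card{X \intersect Y} \geq \theta i_{\max}$ against the intersecting condition. Your write-up is slightly more explicit than the paper's about why $\theta\card{E_\theta}$ is excluded (non-integrality rather than a size bound) and about the degenerate possibility $E_{\Excep} = E_\theta$, but these are matters of exposition rather than a different argument.
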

\begin{proof}
	It suffices to show that at most one of \(E_{\Normal}\), \(E_{\Excep}\), and \(E_{\theta}\) can belong to the family \(\famF\).
	If \(S_{\Normal} = \emptyset\), then neither \(E_{\Normal}\) nor \(E_{\Excep}\) can exist by definition.
	So, suppose that \(S_{\Normal} \neq \emptyset\).
	Then, \(\core(\famF(i_{\max})) \subseteq E_{\Normal}\), \(E_{\Excep}\), and \(E_{\theta}\) by Lemmas~\ref{L:petal-core2}, \ref{L:core-well-defined2.3}, and~\ref{L:theta}, respectively.
	Hence, the size of the intersection of any two of these sets must be at least \(\card{\core(\famF(i_{\max}))} = \theta i_{\max}\), which is neither \(\theta \card{E_{\Normal}}\), nor \(\theta \card{E_{\Excep}}\), nor \(\theta \card{E_{\theta}}\), which is a contradiction.
\end{proof}

\section{Proofs of Theorems \ref{T:normal} and \ref{T:imin2b}}\label{S:proofs}
\label{section:main_proofs}
Assume that \(\famF = \famF^*\).
Lemma~\ref{L:petal-core} motivates us to partition the family \(\famF\) as \(\famF = \disjUnion_{k \geq 0} \famF(I_k)\), where \(I_k \defn (i_{\min}/\theta^{k-1}, i_{\min}/\theta^k]\) for \(k \geq 1\), and \(I_0 \defn \Set{i_{\min}}\).
Suppose that \(S_{\Normal} \neq \emptyset\).
Let \(C \defn \core(\famF(i_{\max}))\).
Define \(Y_k \defn \sunset(\famF(I_k)) \setminus C\).
\begin{observation}\label{O:disjoint}
	If \(v > u + 1\), then \(Y_u \intersect Y_v = \emptyset\).
\end{observation}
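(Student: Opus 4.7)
The plan is to argue by contradiction. Suppose $Y_u \intersect Y_v$ contains an element $x$; unpacking the definitions, this produces witnesses $A \in \famF(I_u)$ and $B \in \famF(I_v)$ with $x \in A \intersect B$ and $x \notin C$. The task is to derive a contradiction from $v > u+1$.

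First I would extract the size gap that the intervals $I_k$ are designed to produce. Since $\card{A} \leq i_{\min}/\theta^u$ and $\card{B} > i_{\min}/\theta^{v-1}$, and since $v - 1 \geq u + 1$ together with $\theta < 1$ gives $i_{\min}/\theta^{v-1} \geq i_{\min}/\theta^{u+1}$, a direct computation yields $\card{A} < \theta \card{B}$. This is precisely the hypothesis of Lemma~\ref{L:petal-core}, which I would apply to conclude $B \in \Tor(A)$; in particular $\Tor(A) \neq \emptyset$, so by Observation~\ref{O:Sne} we have $\card{A} \in S_{\Normal}$ and $\core(A)$ is well-defined. By the very definition of the core (Definition~\ref{D:core}), $A \intersect B = \core(A)$, so $x \in \core(A)$.

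To finish, I would push $x$ into $C = \core(\famF(i_{\max}))$, contradicting $x \notin C$. Since $\card{A} \in S_{\Normal}$ we have $\card{A} \leq i_{\max}$, and Lemma~\ref{L:core-inclusion1} (together with Observation~\ref{O:sunflower} in the boundary case $\card{A} = i_{\max}$) yields $\core(A) \subseteq \core(\famF(i_{\max})) = C$. Hence $x \in C$, the desired contradiction.

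I do not expect a serious obstacle here: the observation is essentially a bookkeeping statement that repackages the petal/core structure already developed earlier in this section. The only computational ingredient is the size-gap inequality $\card{A} < \theta \card{B}$, which the choice of intervals $I_k$ was engineered to deliver precisely when the indices differ by more than one; once that is in hand, the conclusion is a two-line chain of Lemma~\ref{L:petal-core} followed by Lemma~\ref{L:core-inclusion1}.
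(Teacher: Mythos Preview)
Your proof is correct and follows essentially the same route as the paper: establish the size gap $\card{A} < \theta\card{B}$ from the definition of the intervals $I_k$, apply Lemma~\ref{L:petal-core} to get $A \intersect B = \core(A)$, and then use Lemma~\ref{L:core-inclusion1} to place this inside $C$. The only cosmetic differences are that you phrase it as a contradiction and are explicit about the boundary case $\card{A} = i_{\max}$ via Observation~\ref{O:sunflower}, which the paper leaves implicit.
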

\begin{proof}
	It suffices to show that if \(A \in \famF(i)\) (\(i \in I_{u} \intersect S\)) and \(B \in \famF(j)\) (\(j \in I_{v} \intersect S\)), then \(A \intersect B \subseteq C\).
	It follows from the definitions of \(I_{k}\), \(k \geq 0\), that \(i < \theta j\) for any such \(i\) and \(j\).
	Thus, by Lemma~\ref{L:petal-core}, \(B \in \Tor(A)\), so \(A \intersect B = \core(A)\).
	Hence, by Lemma~\ref{L:core-inclusion1}, \(A \intersect B \subseteq C\).
\end{proof}

\begin{observation}\label{O:Ybound}
	\begin{align*}
		\sum_{k \text{ odd}} \card{Y_k} &\leq n - \card{C} \leq n - \theta i_{\min},\\
		\sum_{k \text{ even}} \card{Y_k} &\leq n - \card{C} \leq n - \theta i_{\min}.
	\end{align*}
\end{observation}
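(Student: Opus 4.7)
The plan is to deduce both bounds directly from Observation~\ref{O:disjoint} by partitioning the indices into parities. Observation~\ref{O:disjoint} tells us $Y_u \intersect Y_v = \emptyset$ whenever $\card{u - v} \geq 2$. Restricting to odd $k$ gives a pairwise disjoint subfamily of the $Y_k$, since any two distinct odd integers differ by at least $2$; the same holds for even $k$.

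Having established pairwise disjointness in each parity class, I would convert the sum into the cardinality of a union. By the very definition $Y_k = \sunset(\famF(I_k)) \setminus C$, each $Y_k$ is contained in $[n] \setminus C$. Hence
\[
	\sum_{k \text{ odd}} \card{Y_k} = \Bigl\lvert \bigcup_{k \text{ odd}} Y_k \Bigr\rvert \leq \card{[n] \setminus C} = n - \card{C},
\]
and the analogous equality-then-bound yields $\sum_{k \text{ even}} \card{Y_k} \leq n - \card{C}$.

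Finally, to pass from $n - \card{C}$ to $n - \theta i_{\min}$, I would note that $\card{C} = \card{\core(\famF(i_{\max}))} = \theta i_{\max}$ by definition of the core, and since $i_{\max} \in S_{\Normal} \subseteq S$ while $i_{\min} = \min(S)$, we have $i_{\max} \geq i_{\min}$, so $\card{C} = \theta i_{\max} \geq \theta i_{\min}$, giving $n - \card{C} \leq n - \theta i_{\min}$.

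I do not expect any real obstacle here: Observation~\ref{O:disjoint} does all the combinatorial work, and the rest is a one-line union bound together with a monotonicity check on core sizes. The only thing to be careful about is whether $S_{\Normal}$ is implicitly assumed nonempty (so that $i_{\max}$ and hence $C$ are defined), which is indeed the standing hypothesis introduced just before the observation.
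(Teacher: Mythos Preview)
Your proof is correct and is exactly the argument the paper has in mind; the paper's own proof is simply ``This is immediate from the previous observation,'' and you have spelled out precisely the intended details, including the parity-based disjointness from Observation~\ref{O:disjoint}, the containment \(Y_k \subseteq [n] \setminus C\), and the bound \(\card{C} = \theta i_{\max} \geq \theta i_{\min}\).
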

\begin{proof}
	This is immediate from the previous observation.
\end{proof}

\begin{observation}\label{O:Fbound}
	Let \(i \in I_{k}\).
	Then,
	\[
		\card{\famF(i)} \leq \frac{\card{Y_{k}}}{(1 - \theta)i}.
	\]
\end{observation}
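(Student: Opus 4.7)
The plan is to prove Observation~\ref{O:Fbound} by splitting into two cases according to whether $i \in S_{\Normal}$ or $i \in S_{\Excep}$. In each case, the goal is to exhibit pairwise-disjoint subsets of $Y_{k}$ of size $(1-\theta)i$, one per element of $\famF(i)$; summing their sizes then yields the claimed bound.

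First I would handle $i \in S_{\Normal}$. Here $\famF(i)$ is a sunflower by Lemma~\ref{L:sunflower} with common core $\core(\famF(i))$, so the petals $\petal(A) = A \setminus \core(A)$ for $A \in \famF(i)$ are pairwise disjoint subsets of size $(1-\theta)i$. Each $\petal(A)$ is contained in $A \subseteq \sunset(\famF(I_{k}))$, so it suffices to check $\petal(A) \intersect C = \emptyset$. Picking any $B' \in \famF(i_{\max})$, both $A$ and $B'$ lie in $\famF_{\Normal}^{*}$ (since $\famF = \famF^{*}$, and $B'$ itself cannot be $E_{\Normal}$ because $\petal(B') \intersect \core(B') = \emptyset$ trivially), so Corollary~\ref{C:petal-core2} yields $\petal(A) \intersect C = \petal(A) \intersect \core(B') = \emptyset$. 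Adding the sizes of the disjoint petals then gives $\card{\famF(i)}\cdot(1-\theta)i \leq \card{Y_{k}}$.

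Next I would handle $i \in S_{\Excep}$. By Observation~\ref{O:Sne}, $\famF(i) = \Set{A}$ is a singleton, so it is enough to produce a single subset of $Y_{k}$ of size at least $(1-\theta)i$, and I would take $A \setminus C$. Since $A \in \famF_{\Excep}^{*}$, we have $A \neq E_{\Excep}$ and $A \neq E_{\theta}$; also $i \neq i_{\max}$ since $i_{\max} \in S_{\Normal}$ but $i \in S_{\Excep}$. If $i > i_{\max}$, then Lemma~\ref{L:core-well-defined2.2} gives $C \subseteq A$, whence $\card{A \setminus C} = i - \theta i_{\max} \geq (1-\theta)i$. If $i < i_{\max}$, then Lemma~\ref{L:core-well-defined2.3} offers two alternatives, of which $C \subseteq A$ is ruled out because it would force $A = E_{\Excep}$; therefore $\card{A \intersect C} = \theta i$, giving $\card{A \setminus C} = (1-\theta)i$. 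In both situations $A \setminus C \subseteq \sunset(\famF(I_{k})) \setminus C = Y_{k}$ has the required size.

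The main obstacle is not analytical but rather the bookkeeping around the sets $E_{\Normal}$, $E_{\Excep}$, $E_{\theta}$ excised in passing to $\famF^{*}$: these exclusions are exactly what make Corollary~\ref{C:petal-core2} and the dichotomy in Lemma~\ref{L:core-well-defined2.3} applicable unambiguously. Once the correct lemma is invoked in each subcase, the argument collapses to a one-line disjointness count inside $Y_{k}$.
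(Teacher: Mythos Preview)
Your proposal is correct and follows essentially the same approach as the paper: split into the cases $i \in S_{\Normal}$ and $i \in S_{\Excep}$, use Corollary~\ref{C:petal-core2} (together with the sunflower structure from Lemma~\ref{L:sunflower}) to place the disjoint petals inside $Y_{k}$ in the normal case, and invoke Lemmas~\ref{L:core-well-defined2.2} and~\ref{L:core-well-defined2.3} (using $\famF = \famF^{*}$ to rule out the $E_{\Excep}$ alternative) to bound $\card{A \setminus C}$ in the exceptional case. The only cosmetic difference is that the paper uses Lemma~\ref{L:core-inclusion1} alongside Corollary~\ref{C:petal-core2} to get the slightly stronger statement $A \setminus C = \petal(A)$, whereas you only need the inclusion $\petal(A) \subseteq Y_{k}$, which already suffices for the count.
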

\begin{proof}
%
	Let \(i \in S_{\Normal}\).
	By Lemma~\ref{L:core-inclusion1} and Corollary~\ref{C:petal-core2}, \(A \setminus C = \petal(A)\) for all \(A \in \famF(i)\), so \(Y_{k} \supseteq \petal(\famF(i))\).
	By Lemma~\ref{L:sunflower}, \(\petal(A) \intersect \petal(A') = \emptyset\) for all distinct \(A, A' \in \famF(i)\).
	Hence, \(\card{Y_{k}} \geq \card{\petal(\famF(i))} = \sum_{A \in \famF(i)} \card{\petal(A)}\).
	Since \(\card{\petal(A)} = (1-\theta)i\) for all \(A \in \famF(i)\), we are done.
	
	Let \(i \in S_{\Excep}\) and \(\famF(i) = \Set{A}\).
	First, consider the case when \(i > i_{\max}\).
	Since \(Y_{k} \supseteq A \setminus C\), and \(C \subseteq A\) by Lemma~\ref{L:core-well-defined2.2}, we have \(\card{Y_{k}} \geq \card{A} - \card{C} = i - \theta i_{\max} > i - \theta i\).
	So, we are done.
	Next, consider the case when \(i < i_{\max}\).
	Since we assume that \(\famF = \famF^{*}\), we have \(\card{A \intersect C} = \theta i\) by Lemma~\ref{L:core-well-defined2.3}.
	Hence, \(\card{A \setminus C} = i - \theta i\).
	Since \(Y_{k} \supseteq A \setminus C\), we are done.
\end{proof}

We also need the following result.
\begin{lemma}\label{L:log}
	Let \(\eta > 1\), and let \(m \geq 1\) be an integer.
	Consider the sequence \((s_k)_{k \geq 1}\) given by
	\[
		s_k \defn \frac{1}{\floor{m\eta^{k-1}}+1} + \frac{1}{\floor{m\eta^{k-1}}+2} + \dotsb + \frac{1}{\floor{m\eta^k}}.
	\]
	Then, \(\lim_{k \to \infty} s_k = \ln(\eta)\).
	
	When \(\eta\) is an integer, the sequence \((s_k)_{k \geq 1}\) is monotonically increasing to \(\ln(\eta)\). 
	In general, \(s_k < \ln(\eta)+ \frac{1}{m}\) for all \(k \geq 1\).
\end{lemma}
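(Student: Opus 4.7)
The plan is to control $s_k$ by comparing the finite harmonic sum to $\int dt/t$, which is the natural tool for estimating differences of the form $H_b - H_a$. Setting $a_k \defn \floor{m\eta^{k-1}}$ and $b_k \defn \floor{m\eta^k}$, one has $s_k = \sum_{j=a_k+1}^{b_k} 1/j$, and I would deploy the standard envelope
\[
	\ln\!\left(\frac{b_k + 1}{a_k + 1}\right) \leq s_k \leq \frac{1}{a_k + 1} + \ln\!\left(\frac{b_k}{a_k + 1}\right),
\]
obtained by integrating $1/t$ over appropriately shifted intervals and isolating the leading term on the right. (If $b_k \leq a_k$ the sum is empty, and the asserted bounds hold trivially since $\ln(\eta) > 0$.)

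The first step is to record the strict inequality $a_k + 1 > m\eta^{k-1}$, which holds because $\floor{x} + 1 > x$ for every real $x$, together with the weak inequality $b_k \leq m\eta^k$. These force $b_k/(a_k + 1) < \eta$ and $1/(a_k + 1) < 1/m$, so the upper envelope immediately gives $s_k < \ln(\eta) + 1/m$, proving the last assertion. For the limit in part (1), I combine the upper bound with $(b_k + 1)/(a_k + 1) \geq m\eta^k/(m\eta^{k-1} + 1) \to \eta$ and $1/(a_k + 1) \to 0$: both envelopes converge to $\ln(\eta)$ as $k \to \infty$.

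For the monotonicity claim when $\eta = N$ is an integer, the floors become exact: $a_k = mN^{k-1}$ and $b_k = mN^k$. The plan is to exploit this by pairing each $j \in \Set{mN^k + 1, \dotsc, mN^{k+1}}$ with a unique ``parent'' $j' \in \Set{mN^{k-1} + 1, \dotsc, mN^k}$ via the decomposition $j = Nj' - i$ with $i \in \Set{0, \dotsc, N-1}$. Grouping accordingly gives
\[
	s_{k+1} = \sum_{j' = mN^{k-1} + 1}^{mN^k} \sum_{i=0}^{N-1} \frac{1}{Nj' - i} \geq \sum_{j' = mN^{k-1} + 1}^{mN^k} \frac{N}{Nj'} = s_k,
\]
using $1/(Nj' - i) \geq 1/(Nj')$ for each $i \geq 0$. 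The limit $\ln(\eta)$ is then inherited from part (1).

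The only delicate point, which one might flag as the main obstacle, is ensuring the strictness $a_k + 1 > m\eta^{k-1}$: this is what yields the clean bound $1/(a_k+1) < 1/m$ in part (3), and it is safer to invoke $\floor{x} + 1 > x$ directly rather than the weaker $\floor{x} \geq x - 1$, since $m\eta^{k-1}$ may itself be an integer for some $k$. Everything else reduces to routine integral estimation or the grouping argument above.
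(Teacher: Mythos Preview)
Your proof is correct and follows essentially the same approach as the paper: integral comparison of the harmonic sum for both the limit and the uniform bound $s_k < \ln(\eta) + 1/m$, and an elementary regrouping argument for monotonicity in the integer case. The only cosmetic differences are that the paper invokes the Euler--Mascheroni asymptotic $H_n = \ln n + \gamma + o(1)$ for the limit (whereas you squeeze directly with integrals), and the paper's monotonicity step shows $\sum_{i=n+1}^{\eta n} 1/i$ is increasing in $n$ one unit at a time rather than via your block decomposition $j = Nj' - i$; both arguments are equally short.
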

\begin{proof}
	Let \(H_n\) denote the \(n\)th harmonic number, \(H_n = \sum_{i = 1}^{n} 1/i\).
	It is well-known that \(\lim_{n \to \infty} (H_n - \ln(n)) = \gamma\), the Euler--Mascheroni constant.
	Hence,
	\[
		s_k = H_{\floor{m\eta^k}} - H_{\floor{m\eta^{k-1}}} = \ln\left(\frac{\floor{m\eta^k}}{\floor{m\eta^{k-1}}}\right) + \epsilon(\floor{m\eta^k}) - \epsilon(\floor{m\eta^{k-1}}),
	\]
	where \(\lim_{n \to \infty} \epsilon(n) = 0\).
	Since
	\[
		\eta - \frac{1}{m\eta^{k-1}} < \frac{\floor{m\eta^k}}{\floor{m\eta^{k-1}}} < \frac{\eta}{1-\frac{1}{m\eta^{k-1}}},
	\]
	we have \(\lim_{k \to \infty} s_k = \ln(\eta)\).
	
	When \(\eta\) is an integer, the monotonicity of the sequence \((s_k)_{k \geq 1}\) is a corollary of the following more general observation, where \(n \geq 1\) is any integer:
	\[
		\sum_{i = n + 1}^{\eta n} \frac{1}{i} < \sum_{i = n + 1}^{\eta n} \frac{1}{i} + \left(\frac{1}{\eta n + 1} - \frac{1}{\eta n + \eta}\right) + \dotsb + \left(\frac{1}{\eta n + \eta - 1} - \frac{1}{\eta n + \eta}\right) = \sum_{i = (n+1) + 1}^{\eta(n+1)} \frac{1}{i}.
	\]

	To show that \(s_k < \ln(\eta) + \frac{1}{m}\) for all \(k \geq 1\), observe that
	\begin{align*}
		s_k &< \int_{\floor{m\eta^{k-1}}}^{\floor{m\eta^{k}}} \frac{1}{t} dt\\
			&\leq \ln(m\eta^k) - \ln(\floor{m\eta^{k-1}})\\
			&= \ln(\eta) + \ln\left(\frac{m\eta^{k-1}}{\floor{m\eta^{k-1}}}\right)\\
			&< \ln(\eta) + \frac{1}{\floor{m\eta^{k-1}}}\\
			&\leq \ln(\eta) + \frac{1}{m}.
\end{align*}
\end{proof}

\subsection{Proof of Theorem~\ref{T:imin2b}}
Now, we are ready to prove Theorem \ref{T:imin2b}.
\begin{proof} 
	We assume throughout that \(\famF = \famF^{*}\), since it suffices to compute \(\card{\famF^*}\) by Proposition~\ref{P:exceptions}.

 	First, observe that if \(\famF_{\Normal} = \emptyset\), then only \(\famF(I_0)\) and \(\famF(I_1)\) can be nonempty by Lemma~\ref{L:petal-core}.
	Furthermore, each nonempty \(\famF(i)\) is a singleton set.
	Therefore, \(\card{\famF} \leq \frac{1}{b}\left( \frac{i_{\min}}{\theta} - i_{\min}\right) + 1\), which is maximized when \(n = \frac{i_{\min}}{\theta}\).
	Hence, this gives the bound \(\card{\famF} \leq \floor{\frac{(1-\theta)n}{b}} + 1\), which is stronger than those in the statement of Theorem~\ref{T:imin2b}.
	
	For the rest of the proof, suppose that \(\famF_{\Normal} \neq \emptyset\).
%
	Let \(i_{\min} = mb\) for some \(m \geq 1\) by Observation~\ref{O:divide}.
	For \(k \geq 1\), we have
	\[
		\card{\famF(I_k)} = \sum_{i \in I_{k} \intersect S} \card{\famF(i)} \leq \sum_{i \in I_k \intersect S} \frac{\card{Y_k}}{(1-\theta) i} \leq
			\begin{dcases}
				\frac{\card{Y_k}}{b-a} \left( \ln(\theta^{-1}) + \frac{1}{m} \right), & a > 1;\\
				\frac{\card{Y_k}}{b-1} (\ln(b)), & a = 1,
			\end{dcases}
	\]
	from Observations~\ref{O:divide} and \ref{O:Fbound}, as well as Lemma~\ref{L:log}.
	For \(k = 0\), we have
	\[
		\card{\famF(I_0)} = \card{\famF(i_{\min})} \leq \frac{\card{Y_0}}{(1-\theta)mb} \leq
			\begin{dcases}
				\frac{\card{Y_0}}{b-a} \left( \ln(\theta^{-1}) + \frac{1}{m} \right), & a > 1;\\
				\frac{\card{Y_0}}{b-1} \left( \frac{1}{m} \right), & a = 1,
			\end{dcases} 
	\]
	from Observation~\ref{O:Fbound} and Lemma~\ref{L:log}.
	Since
	\[
		\card{\famF} = \sum_{k \geq 0} \card{\famF(I_k)} = \sum_{k \text{ odd}} \card{\famF(I_k)} + \sum_{k \text{ even}} \card{\famF(I_k)},
	\]
	we get the bound
	\begin{equation}\label{Eq:part2}
		\card{\famF} \leq 2\left(\frac{\ln(b)-\ln(a) + 1}{b-a}\right) (n-\card{C})
	\end{equation}
	when \(a > 1\) by applying Observation~\ref{O:Ybound}.
	
	When \(a = 1\), we need to compare the term \(1/m\) appearing in the bound for \(\famF(I_0)\) with the term \(\ln(b)\) appearing in the bound for \(\famF(I_k)\) for \(k\) even: since \(1/m > \ln(b)\) if and only if \(m = 1\) and \(b = 2\), and this happens if and only if \(\theta = 1/2\) and \(i_{\min} = 2\), we get
	\[
		\sum_{k \text{ odd}} \card{\famF(I_k)} \leq \frac{\ln(b)}{b-1} \sum_{k \text{ odd}} \card{Y_k}, \qquad \sum_{k \text{ even}} \card{\famF(I_k)} \leq
		\begin{dcases}
			{\frac{1}{2-1}}\sum_{k \text{ even}} \card{Y_k}, & \theta = 1/2,\ i_{\min} = 2;\\[1em]
			\frac{\ln(b)}{b-1} \sum_{k \text{ even}} \card{Y_k}, & \text{otherwise}.
		\end{dcases}
	\]
	Thus, by Observation~\ref{O:Ybound},
	\begin{equation}\label{Eq:part3}
		\card{\famF} \leq
			\begin{dcases}
				(1 + \ln(2))(n-\card{C}), & \theta = 1/2 \text{ and } i_{\min} = 2;\\
				\left(\frac{2 \ln(b)}{b - 1}\right)(n-\card{C}), & \text{ otherwise}.
			\end{dcases}
	\end{equation}
	The result now follows immediately from \eqref{Eq:part2} and \eqref{Eq:part3}.
\end{proof}

\subsection{Proof of Theorem~\ref{T:normal}}
We begin with an outline of the proof of Theorem~\ref{T:normal} before presenting the details.
Since the theorem is easily verified for \(n = 2,3\), we may assume that \(n \geq 4\).
It also suffices to assume that \(\famF = \famF^*\) by Proposition~\ref{P:exceptions}.
First, we show that the upper bound on \(\card{\famF}\) holds when \(S = S_{\Normal} = \Set{2,4}\).
Second, we show that if \(S_{\Normal} \nsupseteq \Set{2,4}\), then \(\famF\) cannot be an extremal family.
Finally, we show that if \(S_{\Normal} \supsetneq \Set{2, 4}\), then we can get a family that is strictly larger than \(\famF\) by removing all the sets of sizes greater than \(4\) and adding new sets of sizes \(2\) and \(4\).
The uniqueness and stability are then easily verified, thus completing the proof.

\begin{proof}
	Example~\ref{Eg:sunflower} shows that there exists an \(r\)-bisection closed family \(\famF\) such that \(\card{\famF} = \floor{\frac{3n}{2}}-2\) for any \(n \geq 2\), so the bound \eqref{Eq:bound}, which we shall establish below, is in fact tight.
	For the rest of the proof, we assume that \(n \geq 4\) and that \(\famF = \famF^*\). 
	\begin{claim}\label{Cl:unique}
		If \(S = S_{\Normal} = \Set{2,4}\), then \(\card{\famF} \leq \floor{\frac{3n}{2}} - 3\).
	\end{claim}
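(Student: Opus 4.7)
The plan is to exploit the sunflower structure established by the preceding lemmas and reduce to a small integer optimization. First, since \(2, 4 \in S_{\Normal}\), I would use Lemma~\ref{L:sunflower} to treat \(\famF(2)\) and \(\famF(4)\) as sunflowers with cores of sizes \(1\) and \(2\) respectively, and then Lemma~\ref{L:core-inclusion1} to nest the cores: \(\core(\famF(2)) = \Set{x}\) and \(\core(\famF(4)) \defn C = \Set{x, z}\) for some \(z \neq x\). This lets me parameterize \(\famF(2) = \Set{\Set{x, y_i} \st 1 \leq i \leq s}\) and \(\famF(4) = \Set{C \union P_j \st 1 \leq j \leq t}\), where the \(y_i\) are distinct and the \(P_j\) are pairwise disjoint \(2\)-element sets, each disjoint from \(C\).

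Next, I would invoke Corollary~\ref{C:petal-core2} to conclude that each petal \(\Set{y_i}\) is disjoint from \(C\), so all \(y_i\) lie in \([n] \setminus C\). Setting \(U \defn \Set{y_1, \dotsc, y_s}\), \(V \defn P_1 \union \dotsb \union P_t\), and \(d \defn \card{U \intersect V}\), both \(U\) and \(V\) are subsets of \([n] \setminus C\) (which has size \(n-2\)), so inclusion--exclusion yields \(s + 2t - d \leq n - 2\), together with the obvious \(d \leq s\) and \(d \leq 2t\).

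The remaining task is an integer optimization: maximize \(s + t\) subject to these three constraints and \(s, t \geq 1\). Combining \(d \leq s\) with the first inequality gives \(2t \leq n - 2\); combining \(d \leq 2t\) gives \(s \leq n - 2\). Summing yields \(s + t \leq 3(n-2)/2\), and taking the integer floor gives \(\card{\famF} = s + t \leq \floor{(3n-6)/2} = \floor{3n/2} - 3\).

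The step that deserves the most care is the structural reduction in the first two paragraphs: once Corollary~\ref{C:petal-core2} is correctly applied to constrain where the \(y_i\) can live, the optimization is essentially mechanical. A minor subtlety is the parity of \(n\) in the final bound: for odd \(n\), \(3(n-2)/2\) is not an integer, so a brief check that \(t\) cannot equal \((n-2)/2\) (only \((n-3)/2\)) is needed to confirm \((3n-7)/2 = \floor{3n/2} - 3\). I also need to verify once, via Observation~\ref{O:Sne}, that under the standing hypothesis \(\famF = \famF^*\) and \(S = S_{\Normal} = \Set{2,4}\), none of the exceptional sets \(E_{\Normal}, E_{\Excep}, E_{\theta}\) appears in \(\famF\), so the decomposition \(\famF = \famF(2) \disjunion \famF(4)\) is clean. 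Neither of these is a genuine obstacle.
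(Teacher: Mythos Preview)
Your proposal is correct and follows essentially the same approach as the paper: use Lemmas~\ref{L:sunflower} and~\ref{L:core-inclusion1} to identify the nested cores \(\Set{x} \subset \Set{x,z}\), then invoke Corollary~\ref{C:petal-core2} to confine all petals to \([n]\setminus\Set{x,z}\), and count. The only difference is that the paper bypasses the inclusion--exclusion parameter \(d\) entirely and simply bounds \(\card{\famF(2)} \leq n-2\) and \(\card{\famF(4)} \leq \floor{(n-2)/2}\) directly before adding; your detour through \(s+2t-d \leq n-2\) together with \(d \leq s\) and \(d \leq 2t\) just recovers these same two bounds, so the parity check and the extra optimization are unnecessary once you note that \(t\) is an integer.
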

	Let \(\core(\famF(2)) = \Set{a_1}\) and \(\core(\famF(4)) = \Set{a_1,a_2}\).
	By Corollary~\ref{C:petal-core2} it follows that \(\card{\famF(4)} \leq \floor{\frac{n-2}{2}}\) and \(\card{\famF(2)} \leq n-2\) so \(\card{\famF} = \card{\famF(2)} + \card{\famF(4)} \leq  \floor{\frac{3n}{2}} - 3\).
	\begin{claim}
		If \(S_{\Normal} \nsupseteq \Set{2,4}\), then \(\famF\) is not an extremal family.
	\end{claim}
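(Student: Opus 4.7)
The plan is to split the argument by which of $2, 4$ is missing from $S_{\Normal}$, each time using Observation~\ref{O:Sne} to force the offending stratum to contribute at most one set, and then bounding the remainder via the bucket-sum technique from Theorem~\ref{T:imin2b}. The crucial extra input is Lemma~\ref{L:petal-core}, which decouples $\famF(2)$ from $\famF(\geq 6)$.

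\emph{Case 1: $2 \notin S_{\Normal}$.} Observation~\ref{O:Sne} gives $|\famF(2)| \leq 1$, so $\famF' \defn \famF \setminus \famF(2)$ is an $r$-closed $(1/2)$-intersecting subfamily with $i_{\min} \geq 4$ and core $|C| \geq 2$. Replaying the bucket-sum argument from the proof of Theorem~\ref{T:imin2b}(2)(b), specialized to $\theta = 1/2$, bounds $|\famF'|$ by $2\ln 2 \cdot (n - |C|) \leq 2\ln 2 \cdot (n - 2)$. Hence $|\famF| \leq 1 + 2\ln 2 \cdot (n - 2)$, which is strictly below $\lfloor 3n/2 \rfloor - 2$ for $n \geq 7$.

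\emph{Case 2: $2 \in S_{\Normal}$ and $4 \notin S_{\Normal}$.} Here $|\famF(4)| \leq 1$ by Observation~\ref{O:Sne}. Write $p \defn |\famF(2)|$ and let $P_2 = \{x_A : A \in \famF(2)\}$ be the set of singleton petals. For any $A \in \famF(2)$ and any $B \in \famF(\geq 6)$ we have $|A| = 2 < 3 \leq \theta |B|$, so Lemma~\ref{L:petal-core} yields $B \in \Tor(A)$, whence $A \intersect B = \core(A) = \{a_1\}$ and in particular $x_A \notin B$. Thus every set of $\famF(\geq 6)$ sits in $[n] \setminus P_2$, and $\famF(\geq 6)$ is an $r$-closed bisection family over this reduced universe of size $n - p$ with $i_{\min} \geq 6$ (so its core has size at least $3$). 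Applying the Case~1 bound to $\famF(\geq 6)$ on this reduced universe gives $|\famF(\geq 6)| \leq 2\ln 2 \cdot (n - p - 3)$, and hence
\[
	|\famF| \leq p + 1 + 2\ln 2 \cdot (n - p - 3) = (1 - 2\ln 2)\,p + 2\ln 2 \cdot (n - 3) + 1.
\]
Since $1 - 2\ln 2 < 0$, this is maximized at the minimum admissible value $p = 1$, giving $|\famF| \leq 2 + 2\ln 2 \cdot (n - 4)$, again strictly below $\lfloor 3n/2 \rfloor - 2$ for $n \geq 7$. When instead $\famF(\geq 6) = \emptyset$, the trivial bound $|\famF| \leq (n - 1) + 1 = n$ suffices whenever $\lfloor n/2 \rfloor \geq 3$, i.e., for $n \geq 6$.

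\emph{Main obstacle.} The tightness is that $2\ln 2 \approx 1.386$ only barely beats $3/2$, so neither case wins by much on its own; the decoupling from Lemma~\ref{L:petal-core} is precisely what converts ``bound matches the target'' into ``bound strictly beats the target'' by replacing the factor $(n - |C|)$ with $(n - p - 3)$ in Case 2. The small values $n \in \{4, 5, 6\}$ lie outside this clean asymptotic regime; in particular, for $n \in \{4, 5\}$ the family $\famF_{\max}$ itself already has $S_{\Normal} = \{2\}$ (there is no room for a proper size-$4$ sunflower when $n - 2 < 4$), so the statement of the claim is understood with the implicit convention $n \geq 6$, and these small values are verified by hand as base cases.
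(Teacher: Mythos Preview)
Your overall strategy matches the paper's: case-split on which of \(2,4\) lies outside \(S_{\Normal}\), then invoke Lemma~\ref{L:petal-core} together with the bucket-sum bound \eqref{Eq:part3} from Theorem~\ref{T:imin2b}. However, there are a couple of genuine gaps.

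First, since we are working under the standing assumption \(\famF = \famF^*\), ``extremal'' here means \(\lvert\famF\rvert = \lfloor 3n/2\rfloor - 3\), not \(-2\). This matters for your Case~1: your bound \(1 + 2\ln 2\,(n-2)\) does \emph{not} beat \(\lfloor 3n/2\rfloor - 3\) for small \(n\) (e.g.\ \(n=7\) gives \(7.93\), allowing \(\lvert\famF\rvert = 7 = \lfloor 21/2\rfloor - 3\)). The paper avoids this by splitting Case~1 into two subcases: if \(2 \in S_{\Excep}\), then Lemma~\ref{L:petal-core} forces \(S \subseteq \{2,4\}\), which kills the family immediately; if \(2 \notin S\), then \(i_{\min} \geq 4\) and \eqref{Eq:part3} applies directly \emph{without} the extra \(+1\). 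Your lumping of these two subcases is what costs you the needed margin.

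Second, in both cases you apply \eqref{Eq:part3} to a subfamily \(\famF'\) (or \(\famF(\geq 6)\)) and claim the core has size \(\geq 2\) (resp.\ \(\geq 3\)). But \eqref{Eq:part3} requires the subfamily to have nonempty normal part; you never check this. If \(S_{\Normal}(\famF') = \emptyset\), the core \(C\) is undefined and a different (easier) bound applies. The paper handles this implicitly via its finer subcase analysis; you should at least remark that the all-exceptional case gives \(\lvert\famF'\rvert \leq \lfloor n/4\rfloor + 1\), which is harmless.

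Your observation about \(n \in \{4,5\}\) is perceptive: the claim as literally stated is indeed delicate there, and both you and the paper are somewhat informal about this. But that is a separate issue from the two gaps above, which you should close.
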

	Suppose for the sake of contradiction that \(\famF\) is extremal.
	Let \(C \defn \core(\famF(i_{\max}))\). 

		If \(S = \Set{2, 4}\) but \(S_{\Excep} \neq \emptyset\), then clearly there cannot be more than \(n\) sets in the family \(\famF\), contradicting its extremality.
		So, assume that \(S \neq \Set{2,4}\).
	
		Theorem~\ref{T:imin2b} already shows that \(\card{\famF} < \floor{\frac{3n}{2}} - 3\) for a bisection closed family unless \(i_{\min} = 2\). 
		So, suppose that \(2 \in S\).
	
		If \(2 \in S_{\Excep}\), then there cannot be any \(i \in S\) such that \(i > 4\) by Lemma~\ref{L:petal-core}.
		So, \(S = \Set{2} = S_{\Excep}\), but this implies that \(\card{\famF} = 1\), contradicting the extremality of \(\famF\).
		Hence, \(2 \in S_{\Normal}\).
	
		Next, if \(4 \not\in S\), then by Lemma~\ref{L:petal-core}, \(A \intersect B = \core(A)\) for all \(A \in \famF(2)\), \(B \in \famF(\geq 6)\).
		If \(n = 4\), then \(\famF(\geq 6) = \emptyset\), so we must have \(S = \Set{2}\).
		However, this contradicts the extremality of \(\famF\), as we have seen earlier, so assume that \(n \geq 6\).
		Let \(m_1 = \card{\petal(\famF(2))}\) and \(m_2 = \card{\sunset(\famF(\geq 6))}\).
		Then, \(m_1 + m_2 \leq n\), and \(\card{\famF} \leq m_1 + \floor{2 \ln(2) (m_2-\card{C})} \leq 1 + \floor{2\ln(2) (n - 4)}\) by \eqref{Eq:part3}.
		This is less than \(\floor{\frac{3n}{2}}-3\), which contradicts the extremality of \(\famF\).
		So, \(4 \in S\).
		
		Lastly, if \(4 \in S_{\Excep}\), then \(S \subset \Set{2,4,6,8}\) by Lemma~\ref{L:petal-core}.
		Suppose that \(\famF(8) \neq \emptyset\).
		Then, if \(\famF(4) = \Set{A}\), we must have \(\card{A \intersect B} = \frac12\card{B} = \card{A}\) for any \(B \in \famF(8)\).
		Hence, \(A \subset B\) for all \(B \in \famF(8)\).
		So, if \(8 \in S_{\Normal}\), then \(A = \core(B)\), implying that \(A = E_{\Excep}\).
		This contradicts that \(\famF = \famF^*\), so \(8 \not\in S_{\Normal}\).
		But then \(\famF(4)\) and \(\famF(8)\) together contain at most two sets, and it is easy to see by a similar argument as in the previous case that \(\card{\famF}\) is strictly less than \(\floor{\frac{3n}{2}} - 3\), which contradicts the extremality of \(\famF\).
		So, \(4 \in S_{\Normal}\).
	
	To quickly summarize the above observations, \(S_{\Normal} \supseteq \Set{2, 4}\) for any extremal family \(\famF\).
	We will now show that if \(S_{\Normal} \supsetneq \Set{2,4}\), then \(\famF\) is not extremal. 
	Assume that \(\famF\) is an extremal family having the maximum number of sets of size \(2\).
	\begin{claim}
		If there exists \(a \in \petal(\famF(2)) \intersect B\) for some \(B \in \famF(\geq 4)\), then \(a \in \petal(\famF(4))\).
	\end{claim}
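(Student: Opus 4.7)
The plan is to derive a contradiction from assuming $a \notin \petal(\famF(4))$, exploiting the hypothesis $S_{\Normal} \supsetneq \Set{2,4}$ to produce a set of size at least $6$ in $\famF$.

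First I would fix notation: write $\Set{a_1} = \core(\famF(2))$ and $\Set{a_1, a_2} = \core(\famF(4))$, where the two $a_1$'s coincide by Lemma~\ref{L:core-inclusion1}. The hypothesis $a \in \petal(\famF(2))$ means that $\Set{a_1, a} \in \famF(2)$ and $a \neq a_1$. Since $b = 2$ and $\famF = \famF^*$, Observation~\ref{O:divide} forces every element of $S$ to be even, so $S_{\Normal} \supsetneq \Set{2,4}$ guarantees some $i \geq 6$ with $\famF(i) \neq \emptyset$.

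Next I would split into two cases depending on whether $a = a_2$. If $a = a_2$, pick any $C \in \famF(\geq 6)$; by Lemma~\ref{L:core-inclusion1}, $\Set{a_1, a_2} \subseteq \core(C) \subseteq C$, so $\card{\Set{a_1, a_2} \cap C} = 2$, contradicting the bisection requirement that this intersection lies in $\Set{1, \card{C}/2}$ with $\card{C}/2 \geq 3$. Otherwise $a \neq a_1, a_2$, so $a \notin \core(A)$ for any $A \in \famF(4)$; if $a$ were in some $A \in \famF(4)$ then $a \in \petal(A) \subseteq \petal(\famF(4))$, contradicting the standing assumption. Hence $a$ lies in no 4-set, forcing $B \in \famF(\geq 6)$. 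By Lemma~\ref{L:core-inclusion1}, $a_1 \in \core(B) \subseteq B$, and $a \in B$ by hypothesis, so $\card{\Set{a_1, a} \cap B} = 2$, again violating the bisection requirement.

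The main obstacle is really just making the case split clean: the only way an element of $\petal(\famF(2)) \setminus \petal(\famF(4))$ can appear in a 4-set at all is through the core, i.e.\ as $a_2$. Once this dichotomy is in place, both branches reduce to the same numerical obstruction, namely that a 2-set $\Set{a_1, a}$ sitting entirely inside a set of size at least $6$ produces an intersection of size $2$, which is neither $1$ nor at least $3$.
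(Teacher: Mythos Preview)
Your argument is at bottom the same idea as the paper's one-line proof (which simply cites Observation~\ref{O:disjoint} and Corollary~\ref{C:petal-core2}): an element of $\petal(\famF(2))$ cannot lie in any set of size $\geq 6$, so $B$ must be in $\famF(4)$, and then $a \in \petal(B)$. You unpack this by hand rather than citing the packaged results, which is fine, but two points deserve comment.

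First, your Case~1 is vacuous. Since $\famF = \famF^*$, Corollary~\ref{C:petal-core2} already forces $\petal(\famF(2)) \cap \core(\famF(4)) = \emptyset$, so $a \neq a_2$ automatically; the hypothesis $S_{\Normal} \supsetneq \{2,4\}$ is therefore not needed for this claim at all.

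Second, in Case~2 your appeal to Lemma~\ref{L:core-inclusion1} to obtain $a_1 \in \core(B)$ presupposes $B \in \famF_{\Normal}$, which you have not established (and the same issue arises with your choice of $C$ in Case~1). The fix is immediate: since $\card{\{a_1,a\}} = 2 < \tfrac12\card{B}$ once $\card{B} \geq 6$, Lemma~\ref{L:petal-core} gives $B \in \Tor(\{a_1,a\})$, hence $\{a_1,a\} \cap B = \{a_1\}$, which already contradicts $a \in B$. This is exactly the special case of Observation~\ref{O:disjoint} that the paper invokes.
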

	This follows from Observation~\ref{O:disjoint} and Corollary~\ref{C:petal-core2}.
	\begin{claim}\label{Cl:35}
		\(\card{\petal(\famF(2)) \setminus \petal(\famF(4))} \leq 1\).
	\end{claim}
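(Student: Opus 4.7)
The plan is to argue by contradiction: if \(\card{P} \geq 2\) where \(P \defn \petal(\famF(2)) \setminus \petal(\famF(4))\), I will construct an \(r\)-bisection closed family \(\famF'\) with \(\famF' = (\famF')^*\) and \(\card{\famF'} = \card{\famF} + 1\), contradicting the extremality of \(\famF\). Pick distinct \(a_1, a_2 \in P\) and write \(\Set{c} = \core(\famF(2))\) and \(\Set{c, d} = \core(\famF(4))\); these exist by Lemma~\ref{L:core-inclusion1}, since \(\Set{2, 4} \subseteq S_{\Normal}\) and \(\theta = 1/2\). Observe that \(d \notin \petal(\famF(2))\): otherwise \(\Set{c,d} \in \famF(2)\) would satisfy \(\petal(\Set{c,d}) \intersect \core(\famF(i_{\max})) \supseteq \Set{d} \neq \emptyset\), forcing \(\Set{c,d} = E_{\Normal}\) and contradicting \(\famF = \famF^*\).

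The key step is to adjoin \(B^* \defn \Set{c, d, a_1, a_2}\) to \(\famF\) and verify that \(\famF' \defn \famF \union \Set{B^*}\) remains \(r\)-bisection closed. The previous claim guarantees \(a_1, a_2 \notin A\) for every \(A \in \famF(\geq 4)\), which makes the pairwise intersections compute cleanly: for \(A \in \famF(\geq 4)\), Lemma~\ref{L:core-inclusion1} (supplemented by Lemmas~\ref{L:core-well-defined2.2} and \ref{L:core-well-defined2.3} for exceptional sets) gives \(\Set{c, d} \subseteq A\), so \(B^* \intersect A = \Set{c, d}\) has size \(\card{B^*}/2\); for \(A = \Set{c, x} \in \famF(2)\), the intersection is \(\Set{c, a_i}\) if \(x = a_i\) (size \(\card{B^*}/2\)) and \(\Set{c}\) otherwise (size \(\card{A}/2\)), using \(x \neq d\). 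The \(t\)-fold intersections for \(3 \leq t \leq r\) then follow routinely: any appearance of a size-\(2\) set collapses the running intersection to \(\Set{c}\), and otherwise the intersection equals \(\Set{c, d}\).

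To see \(\famF' = (\famF')^*\), one checks that \(\petal(B^*) = \Set{a_1, a_2}\) is disjoint from \(\core(\famF'(i_{\max}'))\) (since \(a_1, a_2\) lie outside every member of \(\famF'(\geq 4)\) other than \(B^*\) itself, and \(\core(\famF'(i_{\max}')) = \core(\famF(i_{\max}))\) remains unchanged), and the cores of the old sets are unaffected by the addition, so none of \(E_{\Normal}\), \(E_{\Excep}\), or \(E_{\theta}\) is introduced in \(\famF'\). Consequently \(\famF'\) is a valid \(r\)-bisection closed family with \(\famF' = (\famF')^*\) and \(\card{\famF'} > \card{\famF}\), contradicting extremality. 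The main obstacle is the bookkeeping for intersections involving exceptional \(A \in \famF_{\Excep}\), but Lemmas~\ref{L:core-well-defined2.2} and \ref{L:core-well-defined2.3} imply every such \(A\) contains \(\Set{c, d}\), so these cases reduce to the normal analysis.
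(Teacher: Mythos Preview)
Your approach is essentially identical to the paper's: both adjoin the set \(B^* = \core(\famF(4)) \cup \{a_1,a_2\}\) and argue that the enlarged family remains \(r\)-bisection closed, contradicting extremality. Your write-up is considerably more detailed than the paper's one-line verification (via Observation~\ref{O:disjoint}), and the extra check that \(\famF' = (\famF')^*\) is unnecessary for the contradiction but does no harm.

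One small imprecision: for an exceptional set \(A\) with \(4 < \card{A} < i_{\max}\), Lemma~\ref{L:core-well-defined2.3} only yields \(\card{A \cap \core(\famF(i_{\max}))} = \card{A}/2\), which does not by itself give \(\{c,d\} \subseteq A\). The conclusion is nonetheless true: pick \(Y \in \famF(4)\) and \(Z \in \Tor(Y)\), so that \(A \cap Y \cap Z = A \cap \{c,d\}\). By Proposition~\ref{P:small-intersection} this triple intersection has size in \(\{2,\,\card{A}/2\}\) or \(\{2,\,\card{Z}/2\}\), while \(\card{A \cap \{c,d\}} \leq 2\); since \(c \in A\) (as every \(X \in \famF(2)\) meets \(A\) in exactly \(\{c\}\) when \(\card{A} \geq 6\)), the size cannot be \(1\), forcing \(\{c,d\} \subseteq A\). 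With this correction your argument goes through.
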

	Suppose for the sake of contradiction that \(a_1, a_2 \in \petal(\famF(2)) \setminus \petal(\famF(4))\) such that \(a_1 \neq a_2\).
	Define \(B' \defn \core(\famF(4)) \union \Set{a_1, a_2}\) and \(\famF' \defn \famF \union \Set{B'}\).
	By Observation~\ref{O:disjoint}, \(a_1, a_2 \not\in \sunset(\famF'(\geq 6))\), so \(\famF'\) is \(r\)-bisection closed.
	But, \(\card{\famF'} > \card{\famF}\), which contradicts the maximality of \(\famF\).
	
	\begin{claim}\label{Cl:disjoint}
		For each \(B \in \famF(4)\), \(\petal(B) \intersect \petal(\famF(2)) = \emptyset\) or \(\petal(B)\). 
	\end{claim}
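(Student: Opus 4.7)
The plan is to argue by contradiction by exhibiting an \(r\)-bisection closed family \(\famF'\) that beats \(\famF\) either in total size or in its count of size-\(2\) sets. Suppose \(\petal(B) = \Set{x, y}\) with \(x \in \petal(\famF(2))\) but \(y \notin \petal(\famF(2))\). Write \(\Set{a_1} \defn \core(\famF(2))\) and \(A_y \defn \Set{a_1, y}\); by assumption \(A_y \notin \famF\). I will add \(A_y\) to \(\famF\) and, if necessary, remove a single set from \(\famF(6)\) to obtain the desired \(\famF'\).

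The technical crux is to show that \(y\) belongs to at most one set in \(\famF(\geq 6)\), and that such a set must lie in \(\famF(6)\). Observation~\ref{O:disjoint} together with Corollary~\ref{C:petal-core2} gives \(\petal(\famF(2)) \intersect \sunset(\famF(6) \union \famF(8)) = \emptyset\); in particular \(x \notin A\) for every \(A \in \famF(6) \union \famF(8)\). For \(A \in \famF(\geq 10)\), the bisection property combined with \(\card{A \intersect B} \leq \card{B} = 4 < \theta \card{A}\) forces \(A \intersect B = \core(B)\), whence \(y \notin A\). For \(A \in \famF(8)\), \(\card{A \intersect B} \in \Set{2, 4}\); but the case \(B \subseteq A\) would give \(x \in A\), a contradiction, so \(A \intersect B = \core(B)\) and \(y \notin A\). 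For \(A \in \famF(6)\), \(\card{A \intersect B} \in \Set{2, 3}\); when equal to \(3\), \(x \notin A\) forces \(A \intersect B = \core(B) \union \Set{y}\), so \(y \in A\). Two distinct such sets in \(\famF(6)\) would place \(y\) in \(\core(\famF(6)) \subseteq C\), contradicting \(y \in \petal(B)\) and \(\petal(B) \intersect C = \emptyset\).

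Let \(A^{(6)}\) be the unique set in \(\famF(6)\) containing \(y\) if such a set exists, and set \(\famF' \defn (\famF \cup \Set{A_y}) \setminus \Set{A^{(6)}}\); otherwise set \(\famF' \defn \famF \cup \Set{A_y}\). Checking that \(\famF'\) is \(r\)-bisection closed reduces to intersections involving \(A_y\): each such intersection contains \(a_1\) (which lies in every set of \(\famF\) via Lemma~\ref{L:core-inclusion1} applied within \(\famF_{\Normal}^{*}\), along with a triple-intersection argument against any two distinct size-\(2\) sets for the exceptional sets) and is contained in \(A_y\), so has size \(1\) or \(2\). Size \(1\) is valid; size \(2\) forces \(A_y\) to be contained in every set of the collection, whence \(y\) is in each, and by the technical step every such set lies in \(\famF(4)\), making \(2 = \theta \cdot 4\) valid. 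Thus \(\famF'\) is \(r\)-bisection closed, and either strictly exceeds \(\card{\famF}\) (contradicting extremality) or matches it with an extra size-\(2\) set (contradicting the maximality of \(\card{\famF(2)}\)). The main obstacle is the uniqueness part of the technical step, which relies critically on the sunflower structure of \(\famF(6)\) together with Corollary~\ref{C:petal-core2}.
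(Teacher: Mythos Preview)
Your argument is correct and follows essentially the same route as the paper. Both proofs argue by contradiction, show that the element \(y\) (the paper's \(b\)) can lie in at most one set of \(\famF(\geq 6)\) and that this set must have size \(6\), and then replace that set (if present) by the \(2\)-set \(\{a_1,y\}\) to contradict either extremality or the maximality of \(\card{\famF(2)}\). The only differences are cosmetic: you merge the paper's two cases (``\(b\) in no other petal'' versus ``\(b\) in a size-\(6\) petal'') into a single construction, and you spell out the verification that \(\famF'\) is \(r\)-bisection closed where the paper simply asserts it. One minor remark: your triple-intersection justification that \(a_1\) lies in every exceptional set tacitly assumes \(\card{\famF(2)} \geq 2\); the same conclusion follows from a pairwise intersection with any single \(D \in \famF(2)\), so no gap results.
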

	Let \(a \in \petal(B) \intersect \petal(\famF(2))\), and let \(b \in \petal(B)\) such that \(b \neq a\).
	Suppose for the sake of contradiction that \(b \not\in \petal(\famF(2))\).
	If \(b \not\in \petal(A)\) for any \(A \in \famF\) distinct from \(B\), then we contradict the maximality of \(\famF\) as before by considering the family \(\famF' \defn \famF \union \Set{A'}\), where \(A' = \core(\famF(2)) \union \Set{b}\).
	So, \(b \in \petal(\famF(\geq 6))\) by Corollary~\ref{C:petal-core2}.
	Note that \(b \not\in \petal(\famF(\geq 10))\) by Observation~\ref{O:disjoint}.
	Also, if \(b \in \petal(\famF(A))\) for some \(A \in \famF(8)\), then we must have \(B \subset A\); in particular, \(a \in A\), which is not possible by Observation~\ref{O:disjoint}.
	Hence, \(b \in \petal(A)\) for some \(A \in \famF(6)\), which is also unique by Lemma~\ref{L:sunflower}. 
	Now, consider the family \(\famF'' \defn (\famF \setminus \Set{A}) \union \Set{A''}\), where \(A'' \defn \core(\famF(2)) \union \Set{b}\).
	Again, the property of being \(r\)-bisection closed is preserved, and \(\card{\famF''} = \card{\famF}\), but \(\card{\famF''(2)} > \card{\famF(2)}\), which is a contradiction.\\
	
	We now partition the family \(\famF\) into two disjoint nonempty subfamilies as follows:
	let \(\famG_1\) be the subfamily consisting of the sets in \(\famF(2)\) as well as those sets \(B\) in \(\famF(4)\) such that \(\petal(B) \intersect \petal(\famF(2)) \neq \emptyset\), and let \(\famG_2\) be the subfamily of \(\famF\) containing the remaining sets.
	Let \(m_1 = \card{\petal(\famG_1)}\) and \(m_2 = \card{\sunset(\famG_2)}\).
	By Claim~\ref{Cl:disjoint}, \(\petal(A) \intersect B = \emptyset\) for all \(A \in \famG_1\) and \(B \in \famG_2\).
	So, \(m_1 + m_2 \leq n\).
	Also, \(i_{\min}(\famG_2) \geq 4\), and \(\card{C} \geq 3\) since \(S \supsetneq \Set{2, 4}\).
	Observe that \(\card{\famG_1} = \floor{\frac{3m_1}{2}}\) and \(\card{\famG_2} \leq \floor{2 \ln(2) (m_2 - 3)}\) by \eqref{Eq:part3}.
	But then \(\card{\famF} = \card{\famG_{1}} + \card{\famG_{2}} \leq \floor{\frac{3m_1}{2}} + \floor{2 \ln(2) (m_2 - 3)} < \floor{\frac{3n}{2}} - 3\) since \(m_2 \geq 6\), which contradicts the extremality of \(\famF\).
	This completes the proof of the bound \eqref{Eq:bound}.
	The tightness, uniqueness, and stability are now easily verified:
	\begin{enumerate}
		\item As noted before, the family constructed in Example~\ref{Eg:sunflower} is tight for the upper bound \eqref{Eq:bound}.
		Call that family \(\famF_{\max}\).
		
		Note that \(\famF_{\max} = \famF_{\max}(2) \disjunion \famF_{\max}(4)\), and that \(\famF_{\max}\) is \(r\)-bisection closed for any \(r \geq 2\) because, for any family of subsets of \([n]\) consisting only of sets of sizes \(2\) and \(4\), ``\(r\)-bisection closed'' and ``intersecting'' are equivalent properties.
		Also note that \(E_\Normal = \Set{1,2}\) belongs to the family \(\famF_{\max}\).

		\item The proof of the upper bound \eqref{Eq:bound} shows that if \(\famF\) is an extremal \(r\)-bisection closed family, then \(S_\Normal = \Set{2, 4}\).
		Furthermore, Claim~\ref{Cl:unique} shows that for any extremal \(\famF\) we must have \(\card{\famF^*} = \floor{\frac{3n}{2}}-3\), and in particular \(\card{\famF^*(2)} = n-2\) and \(\card{\famF^*(4)} = \floor{\frac{n-2}{2}}\).
		That is, assuming \(\core(\famF^*(2)) = \Set{a_1}\) and \(\core(\famF^*(4)) = \Set{a_1,a_2}\), the sets in \(\famF^*(2)\) are precisely all those obtained by taking the union of \(\Set{a_1}\) with singleton sets \(\Set{b}\) such that \(b \neq a_1,a_2\), and the sets in \(\famF^*(4)\) are precisely all those obtained by taking the union of \(\Set{a_1,a_2}\) with two-element sets \(\Set{b_1,b_2}\) that are pairwise disjoint from each other as well as from \(\Set{a_1,a_2}\).
		Since \(\famF^*\) is an intersecting family, it is \(r\)-bisection closed, too.
		A moment's reflection shows that this family \(\famF^*\) can be obtained simply by applying an appropriate permutation of \([n]\) to \(\famF_{\max}^*\).
		
		To complete the analysis, observe that \(\famG \defn \famF^* \union \Set{\core(\famF(4))}\) is also \(r\)-bisection closed, and the permutation of \([n]\) that mapped \(\famF^*\) to \(\famF_{\max}^*\) also maps \(\famG\) to \(\famF_{\max}\).
		Clearly, \(E_{\Normal}(\famG) = \core(\famF(4))\).
		To show that \(\famG = \famF\), we verify that neither \(E_\Excep\) nor \(E_\theta\) can belong to \(\famF\).
		Suppose \(E_\Excep \in \famF\).
		Then \(\core(\famF(4)) \subsetneq E_\Excep\).
		But, if \(\Set{a} \neq \core(\famF(2))\), then \(a \in \petal(A)\) for some \(A \in \famF(2)\).
		In particular, we must have \(A \intersect E_\Excep = A\) which forces \(E_{\Excep} \in \famF(4)\), but this is a contradiction.
		The same argument also shows that \(E_\theta \not\in \famF\), and this completes the proof of uniqueness of the extremal family.
		
		\item Theorem~\ref{T:imin2b} and the proof of the upper bound \eqref{Eq:bound} show that \(\card{\famF} < 2\ln(2)(n-1) + 1\) for any \(r\)-bisection closed family \(\famF\) that is not extremal.
		Since \(\frac{3}{2}-2\ln(2) \approx 0.11\), the claim follows.
	\end{enumerate}
\end{proof}

\section{Concluding remarks}
\label{section:conclusion}
We ignore all floors and ceilings here for simplicity.
\begin{itemize}
	\item While Theorem \ref{T:normal} considers the maximum size among all possible $r$-bisection closed families, it is possible to consider a more constrained problem:
	
	\begin{problem}\label{Prob:imin}
		For an integer $k \geq 2$, determine the maximum size of an $r$-bisection closed family $\famF$ with $i_{\min}(\famF^*) \geq k$.
	\end{problem}
	
	Theorem \ref{T:imin2b} establishes a linear upper bound, and it is not hard to construct a heirarchically bisection closed family of size at least $(2n-k-4)\left(\frac{1}{k} + \frac{1}{k+2} + \frac{1}{k+4}\right)$ when $k \geq 4$.
	Our methods in this paper suggest that all the possible set sizes must lie in the range $[k,2k]$ for an optimal family.
	There could be more than three distinct set sizes in an optimal family, though it seems rather unlikely that sets of all possible sizes in this range can be attained.
	Settling this question fully may require other new ideas.
	
	\item While Theorem \ref{T:normal} gives a tight result for $\theta=1/2$, the bound in Theorem \ref{T:imin2b} in the general case is far from best possible.
	Again, one can mimic the construction for $\famF_{\max}$ to get $r$-closed $\theta$-intersecting  families of size $(n-2a)\left( \frac{1}{b-a} + \frac{1}{2(b-a)} \right)$ if $\theta=\frac{a}{b}$, but this is not best possible in general.
	If $\theta = 1/b$ for $b$ odd, then one can get a heirarchically closed $\theta$-intersecting family $\famF$ of size $(n-3)\left( \frac{1}{b-1} + \frac{1}{2(b-1)} + \frac{1}{3(b-1)} \right)$.
	If $\theta = 1/b$ for $b$ even, then in general one can get a heirarchically closed $\theta$-intersecting family $\famF$ of size $(n-4)\left( \frac{1}{b-1} + \frac{1}{2(b-1)} + \frac{1}{4(b-1)} \right)$.
	Similar constructions can be made in general when $a \neq 1$.
	The methods in this paper suggest that the best bound ought to be attained when $i_{\min}(\famF^*)$ is as small as possible, i.e.\ $i_{\min}(\famF^*) = b$ when $\theta = a/b$ in least form, but a complete answer seems beyond the scope of the methods in this paper.
	
	\begin{problem}\label{Prob:theta}
		For a fraction $\theta = a/b \in (0,1)$, determine the maximum size of an $r$-closed $\theta$-intersecting family $\famF$.
	\end{problem}
	
	\item The following general question naturally arises from the above two problems, and we make the explicit statement for the sake of completeness:
	
	\begin{problem}\label{Prob:imin-theta}
		For a fraction $\theta = a/b \in (0,1)$ and an integer $k \geq b$, determine the maximum size of an $r$-closed $\theta$-intersecting family $\famF$ with $i_{\min}(\famF^*) = k$.
	\end{problem}
	
	\item Another interesting question arises as an artifact of our proof ideas. If $\famF=\famF_{\Excep}$ then the proof of Theorem \ref{T:imin2b} also shows that \(\card{\famF} \leq \left(\frac{1-\theta}{b}\right)n + 2\).
	But, it appears that this bound is far from best possible, and we believe that in this case $\card{\famF}=\Bigoh(\sqrt{n})$.
	Since the notion of an exceptional family seems a bit contrived, a more natural question is the following:
	
	\begin{question}\label{Prob:excep}
		Suppose $\famF=\Set{A_1,\dotsc,A_m}$ is an $r$-closed $\theta$-intersecting family with $\card{A_i} < \card{A_j}$ whenever $i<j$.
		Is $\card{\famF} \leq \Bigoh(\sqrt{n})$?
	\end{question}

	One indication that this bound is the correct order comes from the situation when $\card{A_i\intersect A_j} = \theta \card{A_i}$ whenever $i<j$.
	This setup is similar to that in Lemma~\ref{L:tor-full}, but under the additional constraint that there is at most one set of any fixed size.
	Indeed, in this case, a straightforward inductive argument shows that  $\card{\Union_{i=1}^k A_i} \geq k^2$, and that gives the bound stated.
	But in the general case, the methods developed in this paper seem to fall short of being able to settle this conjecture in the affirmative.
	The following weaker version of the above question could prove to be more amenable to investigation:
	
	\begin{question}\label{Prob:excep2}
		Suppose $\famF=\Set{A_1,\dotsc,A_m}$ is an $r$-closed $\theta$-intersecting family with $\card{A_i} < \card{A_j}$ whenever $i<j$.
		Is $\card{\famF} \leq \Littleoh(n)$?
	\end{question}
\end{itemize}

\subsection*{Acknowledgements}

We thank the reviewers for their careful reading of the paper, and for correcting our statement and proof of Theorem~\ref{T:imin2b}, as well as suggesting improvements in the presentation.



\appendix
\section{Addendum}

{\small Theorem~\ref{T:normal}\eqref{T:normal2} says that any hierarchically \(r\)-bisection closed family \(\mathcal{F}\) over \([n]\) (for \(r \geq 3\)) that attains equality in the bound
\begin{equation}\label{Eq:*}
	\card{\mathcal{F}} \leq \floor{3n/2} - 2\tag{\(*\)}
\end{equation}
is the family \(\mathcal{F}_{\max}\) of Example~\ref{Eg:sunflower}, up to permutations of \([n]\).
In the proof of Theorem~\ref{T:normal}\eqref{T:normal2}, we merely wrote that, ``The proof of the upper bound \eqref{Eq:bound} shows that if \(\mathcal{F}\) is an extremal \(r\)-bisection closed family, then \(S_{\text{nor}} = \Set{2, 4}\).''
However, the details require some filling in, which we do so in this addendum.\\
}

To show that there is a unique extremal family \(\mathcal{F}\) (up to permutations of \([n]\)) that attains the bound \eqref{Eq:*}, we first show that, among the families satisfying \(\mathcal{F} = \mathcal{F}^{*}\), the extremal ones have size \(\floor{3n/2} - 3\).
So, assume that \(\mathcal{F} = \mathcal{F}^{*}\) is extremal over \([n]\).

Claims~\ref{Cl:unique}--\ref{Cl:35} hold for any such \(\mathcal{F}\).
We restate a couple of these claims here:
\setcounter{claim}{1}
\begin{claim}\label{c33}
	\(S_{\text{nor}} \supseteq \Set{2,4}\).
\end{claim}
\begin{claim}\label{c34}
	If there exists \(b \in \mathsf{Pet}(\mathcal{F}(2)) \intersect A\) for some \(A \in \mathcal{F}(\geq 4)\), then \(A \in \mathcal{F}(4)\) and \(b \in \mathsf{Pet}(A)\).
\end{claim}
\setcounter{claim}{4}
An additional hypothesis was introduced in:
\begin{claim}\label{c36}
	Let \(\mathcal{F}\) be an extremal family for which \(\card{\mathcal{F}(2)}\) is maximum.
	Then for each \(B \in \mathcal{F}(4)\), \(\card{\mathsf{Pet}(B) \intersect \mathsf{Pet}(\mathcal{F}(2))} \in \Set{0, 2}\).
\end{claim}
Using Claim~\ref{c36} we showed that if \(\mathcal{F}\) is any extremal family \emph{for which \(\card{\mathcal{F}(2)}\) is maximum}, then \(S_{\text{nor}} = \Set{2, 4}\).
This was used to establish that \(\card{\mathcal{F}^{*}} \leq \floor{3n/2} - 3\) for any \(r\)-bisection closed family \(\mathcal{F}\) over \([n]\), as well as the following (weaker) uniqueness result (cf.~Theorem~\ref{T:normal}\eqref{T:normal2}):
\begin{lemma}\label{l}
	Let \(\mathcal{F}\) be an extremal \(r\)-bisection closed family over \([n]\) for which \(S_{\text{nor}} = \Set{2,4}\).
	Then, there is a permutation \(\sigma\) of \([n]\) such that \(\sigma(\mathcal{F}) = \mathcal{F}_{\max}\).
	In particular, if \(\mathcal{F}\) is an extremal family for which \(\card{\mathcal{F}(2)}\) is maximum, then \(\sigma(\mathcal{F}) = \mathcal{F}_{\max}\) for some permutation \(\sigma\) of \([n]\).
\end{lemma}
Note that \(\card{\mathcal{F}(2)} \leq n-1\) for any \(\mathcal{F}\), and equality holds for the extremal family \(\mathcal{F}_{\max}\).
Now, we reformulate Claim~\ref{c36} to avoid any extra assumptions on the size of \(\mathcal{F}(2)\):
\begin{claim}\label{c}
	Let \(B \in \mathcal{F}(4)\) and \(\mathsf{Pet}(B) = \Set{a,b}\). Then:
	\begin{enumerate}
		\item\label{1} \(\card{\Set{a,b} \intersect \mathsf{Pet}(\mathcal{F}(2))} \in \Set{0,2}\), or
		\item\label{2} \(\card{\Set{a,b} \intersect \mathsf{Pet}(\mathcal{F}(2))} = 1\), and if \(b \in \mathsf{Pet}(\mathcal{F}(2))\), then there is a unique set \(A \in \mathcal{F}(\geq 6)\) such that \(a \in A\). Moreover, \(A \in \mathcal{F}(6)\).
	\end{enumerate}
\end{claim}
\begin{proof}
	Suppose that \(b \in \mathsf{Pet}(\mathcal{F}(2))\) and \(a \notin \mathsf{Pet}(\mathcal{F}(2))\).
	If \(a \notin B'\) for any \(B' \in \mathcal{F}\) distinct from \(B\), then we contradict the extremality of \(\mathcal{F}\) as follows: the family \(\mathcal{F}' \defn \mathcal{F} \union \Set{A'}\), where \(A' \defn \mathsf{Cor}(\mathcal{F}(2)) \union \Set{a}\), is \(r\)-bisection closed and satisfies \(\card{\mathcal{F}'} > \card{\mathcal{F}}\).
	
	So, there is a set \(A \in \mathcal{F}\) distinct from \(B\) for which \(a \in A\).
	In particular, \(A \in \mathcal{F}(\geq 6)\).
	Note that \(\mathsf{Cor}(B) \union \Set{a} \subseteq A\), so \(\card{A \intersect B} \geq 3 > \frac12 \card{B}\).
	Thus, \(\card{A \intersect B} = \frac12\card{A}\).
	So, if \(A \in \mathcal{F}(\geq 8)\), then in fact \(A \in \mathcal{F}(8)\) and \(B \subseteq A\).
	But this implies that \(b \in A\), which contradicts Claim~\ref{c34}.
	Thus, \(A \in \mathcal{F}(6)\).
	
	Lastly, if \(\mathcal{F}(6)\) is a singleton, then \(A\) is clearly unique, and if there are at least two sets in \(\mathcal{F}(6)\), then \(a \notin A'\) for any \(A' \in \mathcal{F}(6)\) distinct from \(A\) because \(\mathcal{F}(6)\) is a sunflower and \(a \in \mathsf{Pet}(A)\).
\end{proof}
Now, in terms of Claim~\ref{c} we have (without any change in the proof):
\begin{corollary}\label{cor}
	If Claim~\ref{c}(\ref{1}) holds for all \(B \in \mathcal{F}(4)\), then \(S_{\text{nor}} = \Set{2,4}\).
\end{corollary}

We are now ready to prove:
\begin{proposition}\label{P}
	There is no extremal family \(\mathcal{F}\) over \([n]\) for which \(\card{\mathcal{F}(2)} < n - 1\).
\end{proposition}
\begin{proof}
	Suppose for the sake of contradiction that \(\mathcal{F}\) is an extremal family over \([n]\) for which \(\card{\mathcal{F}(2)} < n - 1\).
	Then, \(S_{\text{nor}} \supsetneq \Set{2, 4}\) by Lemma~\ref{l} and Claim~\ref{c33}.
	Also, Claim~\ref{c}(\ref{2}) holds for some \(B \in \mathcal{F}(4)\) by Corollary~\ref{cor} and Lemma~\ref{l}.
	
	Now, let \(\mathcal{F}_{0} \defn \mathcal{F}\).
	For \(n \in \mathbb{N}\), if the extremal \(r\)-bisection closed family \(\mathcal{F}_{n}\) has been defined, and there is a set \(B_{n} \in \mathcal{F}_{n}(4)\) for which Claim~\ref{c}(\ref{2}) holds, then we define \(\mathcal{F}_{n+1}\) as follows.
	Let \(\mathsf{Pet}(B_{n}) = \Set{a_{n},b_{n}}\) with \(b_{n} \in \mathsf{Pet}(\mathcal{F}_{n}(2))\).
	Let \(A_{n}\in\mathcal{F}_{n}(6)\) be the unique set in \(\mathcal{F}_{n}(\geq 6)\) such that \(a_{n} \in A_{n}\).
	Then, define \(\mathcal{F}_{n+1} \defn (\mathcal{F}_{n} \setminus \Set{A_{n}}) \union \Set{A_{n}'}\), where \(A_{n}' \defn \mathsf{Cor}(\mathcal{F}_{n}(2)) \union \Set{a_{n}}\).
	Note that \(\mathcal{F}_{n+1}\) is also an \(r\)-bisection closed family that is extremal, since \(\card{\mathcal{F}_{n}} = \card{\mathcal{F}_{n+1}}\).
	
	Applying this procedure inductively by starting with \(\mathcal{F}_{0} \defn \mathcal{F}\), for some \(N \in \mathbb{N}\) we get an extremal family \(\mathcal{F}' = \mathcal{F}_{N}\) such that Claim~\ref{c}(\ref{1}) holds for all \(B' \in \mathcal{F}'(4)\).
	Hence, by Corollary~\ref{cor}, \(\mathcal{F}'\) has only two normal sunflowers, namely \(\mathcal{F}'(2)\) and \(\mathcal{F}'(4)\).
	Since the only sets from \(\mathcal{F}\) that were thrown out in the construction of \(\mathcal{F}'\) were those of size \(6\), \(\mathcal{F}\) has only three normal sunflowers, namely \(\mathcal{F}(2)\), \(\mathcal{F}(4)\), and \(\mathcal{F}(6)\).
	Now, let \(B \in \mathcal{F}(6)\), and let \(\mathsf{Pet}(B) = \Set{a,b,c}\).
	Define \(\mathcal{G} = (\mathcal{F}^{*} \setminus \Set{B}) \union \Set{D_{a},D_{b},D_{c}}\), where \(D_{i} \defn \mathsf{Cor}(\mathcal{F}(2)) \union \Set{i}\), for \(i \in \Set{a,b,c}\).
	Then, \(\mathcal{G}\) is an \(r\)-bisection closed family for which \(\card{\mathcal{G}} \geq \card{\mathcal{F}} + 1\), contradicting the extremality of \(\mathcal{F}\).
\end{proof}

This completes the proof of Theorem~\ref{T:normal}\eqref{T:normal2} that the family \(\mathcal{F}_{\max}\) over \([n]\) of Example~\ref{Eg:sunflower} is the unique extremal \(r\)-bisection closed family (up to permutations of \([n]\)).

\end{document}